\documentclass[10pt,leqno]{amsart}
\usepackage{graphicx}
\usepackage{indentfirst,csquotes}
\usepackage[indentafter]{titlesec}
\titleformat{name=\section}{}{\thetitle.}{0.8em}{\centering\scshape}
\titleformat{name=\subsection}[runin]{}{\thetitle.}{0.5em}{\bfseries}[.]
\titleformat{name=\subsubsection}[runin]{}{\thetitle.}{0.5em}{\itshape}[.]
\titleformat{name=\paragraph,numberless}[runin]{}{}{0em}{}[.]
\titlespacing{\paragraph}{0em}{0em}{0.5em}
\titleformat{name=\subparagraph,numberless}[runin]{}{}{0em}{}[.]
\titlespacing{\subparagraph}{0em}{0em}{0.5em}
\usepackage{lipsum}

\usepackage{amssymb,amsthm,amsmath}
\usepackage{xcolor,paralist,hyperref,titlesec,fancyhdr,etoolbox,diagbox,multirow,mathtools,bm}
\usepackage[all]{xy}
\theoremstyle{definition}
\newtheorem{definition}{Definition}[]
\newtheorem{remark}[definition]{Remark}
\newtheorem{example}[definition]{Example}
\theoremstyle{plain}
\newtheorem{theorem}[definition]{Theorem}
\newtheorem{corollary}[definition]{Corollary}

\newtheorem{lemma}[definition]{Lemma}
\newtheorem{proposition}[definition]{Proposition}
\renewenvironment{proof}{\noindent\textsc{Proof.}\quad}{\qed}

\hypersetup{ colorlinks=true, linkcolor=black, filecolor=black, urlcolor=black }

\newcommand{\R}{\mathbb{R}}
\newcommand{\Q}{\mathbb{Q}}
\newcommand{\Z}{\mathbb{Z}}

\newcommand{\E}{\mathbb{E}}
\newcommand{\Prob}{\mathbb{P}}
\newcommand{\Var}{\operatorname{Var}}

\newcommand{\im}{\operatorname{im}}
\newcommand{\Spec}{\operatorname{Spec}}
\newcommand{\mc}{\mathrm{mc}}
\newcommand{\sh}{\mathrm{sh}}
\newcommand{\can}{\mathrm{can}}
\newcommand{\one}{\mathbf{1}}
\newcommand{\TV}{\mathrm{TV}}
\begin{document}

\title[Canonical and Microcanonical Ensembles]{Equivalence of Canonical and Microcanonical Ensembles for Euclidean Lattices}
\author[Ziyang Zhu]{Ziyang Zhu}
\date{July 21, 2026}
\address{School of Mathematical Sciences, Capital Normal University, Beijing 100048, China}
\email{zhuziyang@cnu.edu.cn}

\begin{abstract}
We study the equivalence of canonical and microcanonical ensembles for quadratic Hamiltonians on Euclidean lattices. The answer depends on the arithmetic structure of the energy spectrum: exact-energy microcanonical ensembles converge locally to the canonical Gibbs law in the lattice case, but may retain additional arithmetic information in the nonlattice case. We show that this obstruction disappears after replacing an exact energy shell by any fixed-width energy window. Using uniform local central limit theorems, we also obtain sharp asymptotics for the corresponding state counts and recover Bost's thermodynamic entropy.
\end{abstract}

\maketitle

\let\thefootnote\relax
\footnotetext{MSC2020: 60F05, 82B30, 11H06.}

\section{Introduction}
\newtheorem*{maintheorem}{Theorem}

The canonical and microcanonical ensembles are two fundamental descriptions of equilibrium statistical mechanics. The canonical ensemble fixes the temperature and allows the energy to fluctuate, whereas the microcanonical ensemble constrains the total energy, either exactly or within a narrow energy window. At the level of fixed subsystems, ensemble equivalence expresses the principle that the microcanonical law of a small subsystem of a large isolated system should approach the corresponding canonical Gibbs law in the thermodynamic limit, with the inverse temperature chosen to match the prescribed specific energy. We refer to \cite{Ellis} for background on equilibrium statistical mechanics.

In this paper, we formulate this principle as local convergence of probability measures. More precisely, two sequences $\{\mu_N\}$ and $\{\nu_N\}$ of measures on $N$ copies of a phase space are called equivalent if, for every fixed $k\geq1$,
\[\|(\pi_k)_*\mu_N-(\pi_k)_*\nu_N\|_{\TV}\to0,\quad N\to+\infty,\]
where $(\pi_k)_*$ denotes projection onto the first $k$ coordinates and '$\TV$' means total variation. This subsystem interpretation of ensemble equivalence stems from an analogy with a heat bath, see, for example, \cite{Plastino}.

Let $(\Lambda,\|\cdot\|)$ be a Euclidean lattice and consider the quadratic kinetic-energy Hamiltonian $H(x)=\|x\|^2$ on the phase space $\Lambda$. Let $\beta>0$ be interpreted as the inverse temperature (Boltzmann's constant is set to $1$ for simplicity). Consider the following one-particle canonical measure on $\Lambda$,
\[p_{\beta}(x):=Z(\beta)^{-1}e^{-\beta H(x)},\quad Z(\beta):=\sum_{x\in\Lambda}e^{-\beta H(x)}.\]
There are some standard thermodynamic functions
\[\psi(\beta):=\log Z(\beta),\quad m(\beta):=\E_{p_{\beta}}[H],\quad\sigma^2(\beta):=\Var_{p_{\beta}}[H],\quad s(u):=\inf_{\theta>0}\{\psi(\theta)+\theta u\}.\]

For $N$ labelled, noninteracting copies with phase space $\Lambda^N$, the total Hamiltonian is
\[H_N(x_1,\cdots,x_N):=\sum_{i=1}^NH(x_i).\]
No quotient by particle permutations is taken; the sites are independent under the canonical product measure. The canonical measure on $\Lambda^N$ is defined as $\mu_{N,\beta}^{\can}:=p_{\beta}^{\otimes N}$, which is the entropy-maximizing law with kinetic energy $Nu$, where $m(\beta)=u$, see Proposition~\ref{prop:max-entropy}. However, there are two ways to define the microcanonical measure: the equal-a-priori measure $\mu_{N,E}^{\sh}$ on a reachable exact energy shell $H_N^{-1}(E)$, or the equal-a-priori measure $\mu_{N,E,\Delta}^{\mc}$ on a $\Delta$-fixed-width energy window $H_N^{-1}([E,E+\Delta))$.

The motivation comes from Bost's thermodynamic formalism \cite{Bost}. Although his measure-theoretic results allow a general nonnegative Hamiltonian, the examples most closely related to the present work are quadratic kinetic-energy models: Maxwell's model in the continuous setting and the Euclidean-lattice model with $H_{\mathrm{B}}(x)=\pi\|x\|^2$. Bost identified, through cumulative lattice-point counts in large direct sums, the Legendre-Fenchel duality between the logarithmic theta series and a stable Arakelov entropy \cite[Theorem 5.2.1]{Bost}. We ask whether this exponential-scale result can be refined to individual energy levels or fixed-width energy windows, and whether the resulting microcanonical measures are equivalent to the corresponding Gibbs law.

Our primary tools are local central limit theorems (LCLT). In \cite{Cancrini}, Cancrini and Olla established the local central limit theorems and local large deviations to prove the explicit formula for the variance differences of extensive observables between canonical and microcanonical ensembles, clarifying their equivalence and finite-size corrections. Furthermore, Diaconis and Freedman \cite{Diaconis} applied conditional limit theorems to study conditional laws of finitely many coordinates given a sum and their approximation by exponential-family product measures. In contrast, in applying the local central limit theorem, our focus is on the arithmetic properties of $H(\Lambda)$.

\begin{table}[h]
  \centering
\begin{tabular}{c|c|c}
  \hline
  & \textsf{Bost's Arguments} \cite{Bost} & \textsf{Our Arguments} \\
  \hline
  \textsf{Model} & Cumulative Spherical Shell & Exact/Windowed Energy Shell \\
  \hline
  \textsf{Method} & Large Deviation Theory & Local Central Limit Theorems \\
  \hline
  \textsf{Frame} & Classical Thermodynamics & Statistical Ensembles \\
  \hline
\end{tabular}
\end{table}

Our main result shows that whether canonical-microcanonical equivalence holds depends on the arithmetic structure of $H(\Lambda)$.

\begin{maintheorem}[Theorem~\ref{thm:h-equivalent}]
Suppose the abelian group $\langle H(x):x\in\Lambda\rangle_{\Z}$ has rank $r$ and fix a $\Z$-basis $\{a_1,\cdots,a_r\}$. Write
\[H(x)=a\cdot F(x)^T,\quad a=(a_1,\cdots,a_r),\quad F(x)\in\Z^r.\]
For $\lambda\in\R^r$ such that $\lambda\cdot F(x)^T$ is positive definite, define
\[\bm{p}_{\lambda}(x):=\frac{e^{-\lambda\cdot F(x)^T}}{\sum_{y\in\Lambda}e^{-\lambda\cdot F(y)^T}},\quad\bm{m}(\lambda):=\E_{\bm{p}_{\lambda}}[F].\]
Let $E_N=a\cdot n_N^T$ with $n_N\in\Z^r$, and assume the exact shell
\[\left\{(x_1,\cdots,x_N)\in\Lambda^N:\sum_{i=1}^NH(x_i)=E_N\right\}\]
is non-empty. If there exist $\lambda_N\to\lambda$ such that $n_N-N\bm{m}(\lambda_N)=O(1)$, then for every $k\geq1$,
\[\left\|(\pi_k)_*\mu^{\sh}_{N,E_N}-(\pi_k)_*\bm{p}_{\lambda}^{\otimes N}\right\|_{\TV}\to0,\quad N\to+\infty.\]
\end{maintheorem}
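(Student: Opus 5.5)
The proof rests on the conditional-limit structure of the exact shell. Since $a_1,\dots,a_r$ are $\Z$-linearly independent, the condition $\sum_i H(x_i)=E_N=a\cdot n_N^T$ is equivalent to $\sum_i F(x_i)=n_N$ in $\Z^r$. The uniform measure on the shell therefore equals the conditional law of $\bm{p}_{\lambda'}^{\otimes N}$ given $S_N:=\sum_{i=1}^N F(x_i)=n_N$, for any admissible $\lambda'$. This holds because $\bm{p}_{\lambda'}^{\otimes N}(x)$ depends on $x$ only through $S_N(x)$, so it is constant on the shell. We take $\lambda'=\lambda_N$.

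Fix $k$ and write $x=(y,z)$ with $y\in\Lambda^k$. The marginal density of the shell measure is then
\[
(\pi_k)_*\mu^{\sh}_{N,E_N}(y)=\bm{p}_{\lambda_N}^{\otimes k}(y)\,\frac{P_{\lambda_N}\bigl(S_{N-k}=n_N-F(y_1)-\cdots-F(y_k)\bigr)}{P_{\lambda_N}(S_N=n_N)}.
\]
Here $F(y_j)$ abbreviates $F$ applied to the $j$-th coordinate. The plan has three steps.

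First, prove a local CLT for the lattice variable $F$ under $\bm{p}_{\lambda'}$, uniform for $\lambda'$ in a compact neighbourhood of $\lambda$. The mean is $\bm{m}(\lambda')$, and the covariance $\Sigma(\lambda')$ is nondegenerate because the values $F(x)$ span $\Z^r$ and hence are not contained in a hyperplane. The resulting expansion is
\[
P_{\lambda'}(S_M=n)=\frac{g}{(2\pi M)^{r/2}\det\Sigma(\lambda')^{1/2}}\exp\!\Bigl(-\frac{(n-M\bm{m}(\lambda'))\Sigma(\lambda')^{-1}(n-M\bm{m}(\lambda'))^T}{2M}\Bigr)+o(M^{-r/2}),
\]
uniformly in $n$ and $\lambda'$. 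The factor $g$ accounts for a possible sublattice and periodicity structure.

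Second, apply this expansion to the ratio above. By hypothesis $n_N-N\bm{m}(\lambda_N)=O(1)$, and for fixed $y$ the shift $F(y_1)+\dots+F(y_k)-k\bm{m}(\lambda_N)$ is also $O(1)$. Both Gaussian exponents are therefore $O(1/N)$, and the ratio tends to $1$ for each fixed $y$, provided the numerator target lies in the correct coset (see below). Together with $\bm{p}_{\lambda_N}^{\otimes k}(y)\to\bm{p}_\lambda^{\otimes k}(y)$, which follows from continuity in $\lambda$ (dominated convergence on the theta-type sum), we get pointwise convergence of the marginal densities to $\bm{p}_\lambda^{\otimes k}$. Scheffé's lemma then upgrades this to convergence in total variation, since both sides are probability measures on the countable set $\Lambda^k$.

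The main obstacle is the arithmetic and uniformity issue in the LCLT. The support of $F$ may generate $\Z^r$ while the differences $F(x)-F(x')$ generate only a proper sublattice $L$. In that case $S_M$ lives on a coset $Mv+L$, and the LCLT holds only on that coset, with $g=[\Z^r:L]$. This is where nonemptiness of the shell enters: it forces $n_N\in Nv+L$, and then $n_N-\sum_{j\le k}F(y_j)\in(N-k)v+L$ automatically, so the numerator and denominator carry the same factor $g$. I would handle this by the standard characteristic-function proof. Write $\phi_{\lambda'}(t)=\E e^{2\pi i t\cdot F}$ and invert on the torus. Near $t\in L^*$, use a quadratic Taylor bound that is uniform in $\lambda'$, since third moments are uniformly bounded near $\lambda$ by Gaussian tails. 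Away from $L^*$, use the bound $\sup_{\lambda'\in K,\,t\notin U}|\phi_{\lambda'}(t)|<1$, which follows from compactness and joint continuity in $(\lambda',t)$.
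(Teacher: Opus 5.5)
Your proposal is correct and follows essentially the same route as the paper. You identify the shell with $\{S_N=n_N\}$ via the $\Z$-independence of the $a_j$, write the $k$-marginal as $\bm{p}_{\lambda_N}^{\otimes k}(y)$ times a ratio of point probabilities, evaluate both with a $\lambda$-uniform $r$-dimensional LCLT, and finish with Scheff\'e's lemma.

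Your coset bookkeeping is harmless but vacuous here. Since $0\in\Lambda$ and $F(0)=0$, the differences $F(x)-F(0)=F(x)$ already generate $\Z^r$, so $L=\Z^r$ and $g=1$; this is exactly how the paper gets $|\phi_\lambda(t)|<1$ off $2\pi\Z^r$. The same fact $F(0)=0$ is what makes ``the values span $\Z^r$'' imply a nondegenerate covariance, which a priori requires the values not to lie in an \emph{affine} hyperplane.
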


Our proof applies parameter-uniform local central limit theorems, which evaluate the relevant shell probabilities. The counting approach is inspired by the conditional limit theorem \cite{Diaconis}, and the local analysis is based on classical local limit theory: see \cite[Chapter 2]{Lawler} for the lattice case, Shepp and Stone \cite{Shepp, Stone} for nonlattice interval asymptotics.

As a corollary, if $H(\Lambda)$ is lattice, then $\bm{p}_{\lambda}$ degenerates to $p_{\beta}$, so the exact-shell microcanonical ensemble $\mu_{N,E}^{\sh}$ is equivalent to the canonical ensemble $\mu_{N,\beta}^{\can}$. However, when $H(\Lambda)$ is nonlattice, $\mu_{N,E}^{\sh}$ is generally not equivalent to $\mu_{N,\beta}^{\can}$. This breakdown is restored when considering windowed microcanonical ensembles $\mu_{N,E,\Delta}^{\mc}$, see Theorem~\ref{thm:equivalence}. In addition, we compute the entropy density $s(u)$ for both types of microcanonical ensembles. At exponential scale, lattice exact shells and nonlattice fixed-width windows recover Bost's entropy.  Thus our results give a local-scale refinement of Bost's cumulative lattice-point counting.

The paper is organized as follows. In \S\ref{sec:lattices-ensembles}, we introduce the canonical and microcanonical ensembles associated with quadratic Hamiltonians on Euclidean lattices and review the relevant thermodynamic functions. In \S\ref{sec:lattice-exact}, we study exact energy shells and prove the equivalence of canonical and microcanonical ensembles in the lattice case, based on a parameter-uniform local central limit theorem. In \S\ref{sec:window}, we turn to the nonlattice case and show that ensemble equivalence is recovered for microcanonical ensembles on fixed-width energy windows, using a uniform version of Stone's local central limit theorem. Finally, in \S\ref{sec:bost}, we compare our results with Bost's thermodynamic formalism and explain how the local state-counting asymptotics obtained here refine his cumulative lattice-point counting at the exponential scale.

\subsection*{Acknowledgements}
The author thanks capital normal university for opening its gymnasium during the summer holidays.

\section{Euclidean Lattices and the Canonical Ensembles}\label{sec:lattices-ensembles}
In this section, we begin with the discrete phase space model of single-particle kinetic energy to introduce the required ensemble theories. We will define the canonical ensemble in \S\ref{sec:canonical-ensemble} by mimicking the familiar Gibbs law in statistical mechanics. However, to construct the microcanonical ensemble, we employ two models: one considering the exact energy shell, and the other involving a thickened energy shell. These correspond to Definition~\ref{def:exact-shell} and Definition~\ref{def:window}, respectively. The choice of a more suitable model depends on the arithmetic structure of the Hamiltonian.
\subsection{The One-particle Model}\label{sec:one-model}
A Euclidean lattice is a pair $\overline\Lambda=(\Lambda,\|\cdot\|)$, where $\Lambda$ is a free $\Z$-module of rank $d\geq1$ and $\|\cdot\|$ is a Euclidean norm on $V=\Lambda\otimes_{\Z}\R$.  We identify $\Lambda$ with its image in $V$ and set $H(x)=\|x\|^2$.  Positive definiteness implies that every sublevel set $\{x\in\Lambda:H(x)\leq R\}$ is finite.

For $\beta>0$, define the partition function, logarithmic partition function, and one-particle Gibbs law by
\[Z(\beta):=\sum_{x\in\Lambda}e^{-\beta H(x)},\quad\psi(\beta):=\log Z(\beta),\quad p_{\beta}(x):=Z(\beta)^{-1}e^{-\beta H(x)}.\]
The series $Z(\beta)$ is the well-known theta series, which converges locally uniformly on $(0,+\infty)$.

Let $X_{\beta}$ be a random variable with law $p_{\beta}$ and let $Y_{\beta}=H(X_{\beta})$.  We write
\[m(\beta):=\E_{p_\beta}[Y_{\beta}],\quad\sigma^2(\beta):=\Var_{p_\beta}[Y_{\beta}].\]

\begin{proposition}\label{prop:thermodynamic-functions}
The functions $Z$, $\psi$, $m$, and $\sigma^2$ are real analytic on $(0,+\infty)$, and
\begin{equation}\label{eq:derivatives}
\psi'(\beta)=-m(\beta),\quad\psi''(\beta)=\sigma^2(\beta)>0,\quad m'(\beta)=-\sigma^2(\beta)<0.
\end{equation}
Moreover, $m(\beta)\to0$ as $\beta\to+\infty$ and $m(\beta)\to+\infty$ as $\beta\downarrow0$.  Hence $m:(0,+\infty)\to(0,+\infty)$ is a real-analytic decreasing bijection.
\end{proposition}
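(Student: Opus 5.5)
The plan is to prove real analyticity first, via a locally uniform bound on the theta series, and then to read off the derivative identities, the positivity of the variance, and the two limits of $m$.

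\emph{Analyticity.} Since $\|\cdot\|$ is a norm on $V$ and $\Lambda$ is discrete of full rank, there is $c>0$ with $\|x\|^2\geq c|x|_\infty^2$ in any fixed $\Z$-basis. Hence $\#\{x\in\Lambda:H(x)\leq R\}=O(R^{d/2}+1)$. Extend $\beta$ to the complex half-plane $\operatorname{Re}\beta>0$. On any compact subset with $\operatorname{Re}\beta\geq\beta_0>0$, the terms satisfy $|e^{-\beta H(x)}|\leq e^{-\beta_0H(x)}$, and $\sum_x e^{-\beta_0 H(x)}<\infty$ by the polynomial growth of the counting function. So $Z$ is a locally uniform limit of entire functions on the half-plane, hence holomorphic there. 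The same domination argument applies to the termwise derivatives $\sum_x H(x)^k e^{-\beta H(x)}$, since $H^k e^{-\beta_0H/2}$ is bounded. Therefore $Z$ may be differentiated termwise, and $Z^{(k)}(\beta)=(-1)^k\sum_x H(x)^ke^{-\beta H(x)}$. Because $Z(\beta)\geq e^{0}=1>0$ on the real axis (the term $x=0$), $\psi=\log Z$ is real analytic on $(0,+\infty)$. Then $m=-Z'/Z$ and $\sigma^2=Z''/Z-(Z'/Z)^2$ are real analytic as well.

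\emph{Identities.} From the termwise formulas, $\psi'=Z'/Z=-\E_{p_\beta}[H]=-m$ and $\psi''=Z''/Z-(Z'/Z)^2=\E[H^2]-\E[H]^2=\sigma^2$, so $m'=-\psi''=-\sigma^2$. For strict positivity, $\sigma^2(\beta)=0$ would force $H$ to be $p_\beta$-a.s.\ constant. But $p_\beta$ has full support on $\Lambda$, and $H$ takes the value $0$ at $x=0$ and a positive value at any $x\neq0$ (which exists since $d\geq1$). Hence $\sigma^2(\beta)>0$.

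\emph{Limits.} Let $h_0=\min\{H(x):x\neq0\}>0$, which exists by finiteness of sublevel sets. For $\beta\geq1$,
\[m(\beta)\leq\sum_{x\neq0}H(x)e^{-\beta H(x)}\leq e^{-(\beta-1)h_0}\sum_{x}H(x)e^{-H(x)}\to0.\]
For $\beta\downarrow0$ I would argue as follows, and this is the only step needing care. Fix $R>0$. Since $Z(\beta)\to+\infty$ as $\beta\downarrow0$ (monotone convergence, $\Lambda$ infinite), while the mass on the finite set $\{H\leq R\}$ is at most $\#\{H\leq R\}$, we get
\[p_\beta(H\leq R)\leq\#\{H\leq R\}/Z(\beta)\to0.\]
Then $m(\beta)\geq R\,p_\beta(H>R)\to R$, and since $R$ is arbitrary, $m(\beta)\to+\infty$. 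An alternative is the Gaussian comparison $Z(\beta)\asymp\beta^{-d/2}$, which gives $m\sim d/(2\beta)$; the counting argument above avoids it.

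\emph{Bijection.} $m$ is continuous and strictly decreasing, with $m>0$ since $H\geq0$ and $p_\beta(H>0)>0$. Combined with the two limits, the intermediate value theorem makes $m:(0,+\infty)\to(0,+\infty)$ a bijection. Its analyticity was established in the first step.
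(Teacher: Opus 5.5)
Your proposal is correct and takes essentially the same route as the paper: termwise differentiation of the theta series, $\sigma^2>0$ from full support of $p_\beta$ together with the nonconstancy of $H$, and the identical counting bound $p_\beta(H\leq R)\leq\#\{H\leq R\}/Z(\beta)$ combined with $Z(\beta)\to+\infty$ for $\beta\downarrow0$. Two steps are more detailed versions of the paper's. Your extension to the half-plane $\mathrm{Re}\,\beta>0$ actually justifies real analyticity, whereas the paper's appeal to local uniform convergence on the real line alone would only give smoothness. Your explicit bound $e^{-(\beta-1)h_0}\sum_xH(x)e^{-H(x)}$ replaces the paper's dominated-convergence step for $\beta\to+\infty$.
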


\begin{proof}
Local uniform convergence of all differentiated series justifies termwise differentiation and gives analyticity.  Formula \eqref{eq:derivatives} follows by direct calculation.  Since $p_{\beta}(x)>0$ for every $x\in\Lambda$ and $H$ is not constant, the variance is strictly positive.

As $\beta\to+\infty$, the numerator of $m(\beta)=Z(\beta)^{-1}\sum_xH(x)e^{-\beta H(x)}$ tends to zero by dominated convergence, while $Z(\beta)\geq1$ because $0\in\Lambda$.  Thus $m(\beta)\to0$.  As $\beta\downarrow0$, monotone convergence implies $Z(\beta)\to+\infty$.  For $R>0$, the finite set $B_R=\{H\leq R\}$ satisfies
\[\Prob_{p_\beta}(Y_{\beta}\leq R)\leq \#B_R/Z(\beta)\to0,\quad\beta\downarrow0.\]
Consequently $m(\beta)\geq R\Prob_{p_\beta}(Y_{\beta}>R)\to R$.  Letting $R\to+\infty$ proves the second limit.
\end{proof}

Physically, $m(\beta)$ represents the internal energy at inverse temperature $\beta$, which is precisely the average kinetic energy of the particles. Thus, Proposition~\ref{prop:thermodynamic-functions} simply asserts that the internal energy parametrizes the inverse temperature bijectively.

The Legendre-Fenchel transform of $\psi$ is defined as
\[s(u):=\inf_{\beta>0}\{\psi(\beta)+\beta u\},\quad u>0.\]
Since $\frac{d}{d\beta}(\psi(\beta)+\beta u)=-m(\beta)+u$, $s(u)$ admits a unique minimizer at $\beta=m^{-1}(u)$. The physical meaning of the Legendre-Fenchel transform is to convert the temperature-logarithmic partition function $\psi(\beta)$ into the internal energy-entropy function $s(u)$, where the definition of entropy will be given in \S\ref{sec:canonical-ensemble}.

The following proposition is immediate.

\begin{proposition}[Legendre-Fenchel Duality]\label{cor:duality}
The function $s$ is real analytic and strictly concave on $(0,+\infty)$, and
\[s'(u)=m^{-1}(u),\quad s''(u)=-\frac{1}{\sigma^2(m^{-1}(u))}<0.\]
Moreover,
\[\psi(\beta)=\sup_{u>0}\{s(u)-\beta u\}.\]
\end{proposition}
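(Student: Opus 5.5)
The statement concerns $s(u)=\inf_{\beta>0}\{\psi(\beta)+\beta u\}$. By Proposition~\ref{prop:thermodynamic-functions}, $m$ is a real-analytic decreasing bijection of $(0,+\infty)$ with $m'=-\sigma^2<0$. The plan is to exhibit $s$ as an explicit composition with $m^{-1}$, differentiate it directly, and then recover $\psi$ from $s$ by a second one-variable minimization.

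First, regularity of $m^{-1}$. Since $m'(\beta)=-\sigma^2(\beta)\neq0$ everywhere, the real-analytic inverse function theorem shows that $g:=m^{-1}:(0,+\infty)\to(0,+\infty)$ is real analytic. It also gives
\[g'(u)=\frac{1}{m'(g(u))}=-\frac{1}{\sigma^2(g(u))}.\]

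Next, identify the infimum. For fixed $u>0$, set $\phi_u(\beta)=\psi(\beta)+\beta u$. Then $\phi_u'(\beta)=u-m(\beta)$, which is strictly increasing in $\beta$ (because $m$ is decreasing) and vanishes exactly at $\beta=g(u)$. Hence $\phi_u$ is strictly convex with a unique global minimizer $g(u)$ on $(0,+\infty)$, and
\[s(u)=\psi(g(u))+g(u)\,u.\]
This is a composition of real-analytic functions, so $s$ is real analytic.

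Now differentiate (the envelope argument):
\[s'(u)=\psi'(g(u))\,g'(u)+g'(u)\,u+g(u)=\bigl(u-m(g(u))\bigr)g'(u)+g(u)=g(u).\]
Therefore $s'=m^{-1}$, and $s''(u)=g'(u)=-1/\sigma^2(m^{-1}(u))<0$, which gives strict concavity.

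For the duality, fix $\beta>0$ and maximize $\chi_\beta(u)=s(u)-\beta u$ over $u>0$. We have $\chi_\beta'(u)=g(u)-\beta$, which is strictly decreasing and vanishes only at $u=m(\beta)$. So $\chi_\beta$ is strictly concave with its maximum at $u=m(\beta)$, and
\[\sup_{u>0}\{s(u)-\beta u\}=s(m(\beta))-\beta m(\beta)=\psi(\beta)+\beta m(\beta)-\beta m(\beta)=\psi(\beta).\]

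The only real subtlety is confirming that the critical points are global optima on the open half-line rather than merely local ones. This follows from monotonicity of the derivatives, which both sign changes above supply, together with the fact that the bijectivity of $m$ onto all of $(0,+\infty)$ guarantees a critical point exists for every $u$ and every $\beta$.
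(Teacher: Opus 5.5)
Your proof is correct and follows the paper's route. The paper only observes that $\frac{d}{d\beta}(\psi(\beta)+\beta u)=u-m(\beta)$ yields the unique minimizer $\beta=m^{-1}(u)$ and calls the proposition immediate. Your envelope computation from $s(u)=\psi(m^{-1}(u))+m^{-1}(u)\,u$, together with the second maximization at $u=m(\beta)$, supplies exactly the omitted details.
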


\subsection{The Many-site Canonical Ensemble}\label{sec:canonical-ensemble}
For $N\geq1$, let $H_N(x_1,\cdots,x_N)=\sum_{i=1}^NH(x_i)$ be the total Hamiltonian. Define the canonical ensemble on $\Lambda^N$ as
\[\mu^{\can}_{N,\beta}:=p_{\beta}^{\otimes N}.\]
Its internal energy and canonical (Shannon) entropy are
\[U_{N,\beta}:=Nm(\beta),\quad S(\mu_{N,\beta}^{\can}):=-\sum_{x\in\Lambda^N}\mu^{\can}_{N,\beta}(x)\log\mu^{\can}_{N,\beta}(x)=N(\psi(\beta)+\beta m(\beta))=Ns(m(\beta)).\]

The canonical ensemble primarily describes statistical mechanics in equilibrium states, namely, the states of maximum entropy.

\begin{proposition}[Gibbs Variational Principle]\label{prop:max-entropy}
Let $N\geq1$ and $\beta>0$.  Among all probability measures $\nu$ on $\Lambda^N$ satisfying $\E_{\nu}[H_N]=Nm(\beta)$, the canonical measure $\mu^{\can}_{N,\beta}$ uniquely maximizes the Shannon entropy
\[S(\nu):=-\sum_{x\in\Lambda^N}\nu(x)\log\nu(x).\]
\end{proposition}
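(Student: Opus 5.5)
The plan is the standard Gibbs inequality argument: compare an arbitrary competitor $\nu$ with $\mu:=\mu^{\can}_{N,\beta}$ through the relative entropy, and use the energy constraint to cancel the cross term.

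First I will record that $\mu(x)>0$ for every $x\in\Lambda^N$. Since $p_\beta$ has full support on $\Lambda$, we have
\[\log\mu(x)=-\beta H_N(x)-N\psi(\beta),\]
because $\mu(x)=\prod_i Z(\beta)^{-1}e^{-\beta H(x_i)}$. Next, for any probability $\nu$ with $\E_\nu[H_N]=Nm(\beta)$, I will compute
\[-\sum_x\nu(x)\log\mu(x)=\beta\,\E_\nu[H_N]+N\psi(\beta)=N(\beta m(\beta)+\psi(\beta))=S(\mu).\]
The last equality is the entropy formula displayed just before the proposition, which is the same computation with $\nu=\mu$. All series involved have nonnegative terms (or are bounded by ones that converge), so the rearrangement is legitimate. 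Hence
\[S(\mu)-S(\nu)=\sum_x\nu(x)\log\frac{\nu(x)}{\mu(x)}=:D(\nu\|\mu).\]

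It then remains to show that $D(\nu\|\mu)\ge0$, with equality only when $\nu=\mu$. I will use the pointwise inequality $t\log t\ge t-1$ for $t=\nu(x)/\mu(x)\ge0$, which has equality iff $t=1$. Multiplying by $\mu(x)$ and summing gives
\[D(\nu\|\mu)\ge\sum_x(\nu(x)-\mu(x))=0.\]
Equality forces $\nu(x)=\mu(x)$ for all $x$, which gives uniqueness.

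The main obstacle is the well-definedness of $S(\nu)$ on the countable set $\Lambda^N$, since it could a priori be $+\infty$ or fail to converge absolutely. I will handle this as follows. The identity $-\sum\nu\log\mu=S(\mu)<\infty$ is a sum of nonnegative terms, because $\mu\le1$. The terms $\nu\log(\nu/\mu)-\nu+\mu$ are also nonnegative. From these two facts, $S(\nu)=S(\mu)-D(\nu\|\mu)$ is well defined in $[-\infty,S(\mu)]$; in fact it lies in $[0,S(\mu)]$ because $-\nu\log\nu\ge0$. So the decomposition holds without any integrability assumption on $\nu$ beyond the energy constraint.
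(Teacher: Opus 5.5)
Your proof is correct and takes the same route as the paper: you compare with $\mu^{\can}_{N,\beta}$ via relative entropy, use the energy constraint to identify the cross-entropy $\beta\E_\nu[H_N]+N\psi(\beta)$ with $S(\mu^{\can}_{N,\beta})$, and conclude from $D(\nu\|\mu^{\can}_{N,\beta})\geq0$, with equality only at $\nu=\mu^{\can}_{N,\beta}$. The only difference is how finiteness of $S(\nu)$ is handled: the paper first proves $S(\nu)\leq\gamma\E_\nu[H_N]+N\psi(\gamma)<+\infty$ by a finite-truncation Gibbs inequality, whereas your nonnegative pointwise terms $\nu\log(\nu/\mu)-\nu+\mu$ give finiteness and the inequality in one step, which is slightly more economical.
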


\begin{proof}
We first justify finiteness of the entropy.  Let $\gamma>0$ and let $F\subseteq\Lambda^N$ be finite.  Applying the finite-state Gibbs inequality to the normalized restriction $\nu(\cdot|F)$ we obtain
\[-\sum_{x\in F}\nu(x)\log\nu(x)\leq-\nu(F)\log\nu(F)+\gamma\sum_{x\in F}\nu(x)H_N(x)+\nu(F)\log\sum_{x\in F}e^{-\gamma H_N(x)}.\]
Since $Z(\gamma)\geq1$, the right-hand side is at most
$-\nu(F)\log\nu(F)+\gamma\E_{\nu}[H_N]+N\psi(\gamma)$.  Hence, for finite sets $F$ increasing to $\Lambda^N$,
\[S(\nu)\leq\gamma\E_{\nu}[H_N]+N\psi(\gamma)<+\infty.\]
The cross-entropy with respect to $\mu^{\can}_{N,\beta}$ is finite as well, and the relative-entropy identity is therefore legitimate:
\[D(\nu\|\mu^{\can}_{N,\beta})=-S(\nu)+\beta\E_{\nu}[H_N]+N\psi(\beta)\geq0.\]
According to the assumption $\E_{\nu}[H_N]=Nm(\beta)$, we have $S(\nu)\leq S(\mu_{N,\beta}^{\can})$.  Equality holds if and only if the relative entropy vanishes, namely if and only if $\nu=\mu^{\can}_{N,\beta}$.
\end{proof}

\subsection{The Microcanonical Ensembles}
We adopt two approaches to define the microcanonical ensemble because the Hamiltonian is not always lattice. These definitions are analogous to the principle of equal-a-priori probabilities in statistical mechanics.
\begin{definition}[Lattice]
A subset $A\subseteq\R$ is lattice if $A\subseteq a+h\Z$ for some $a\in\R$ and $h>0$.  A real random variable is lattice-valued if its support is lattice.
\end{definition}
In the present setting $0\in H(\Lambda)$, so the energy set $H(\Lambda)$ is lattice if and only if it is contained in $h\Z$ for some $h>0$.
\subsubsection{The Exact-shell Microcanonical Ensemble}
This model is more appropriate when $H(\Lambda)$ is lattice. The reason for this will be discussed in \S\ref{sec:lattice-exact}.

\begin{definition}[Exact-shell Microcanonical Ensemble]\label{def:exact-shell}
For $E\in\im(H_N)$ (such an $E$ is referred to as reachable), denote
\[\Gamma_N(E):=\{x\in\Lambda^N:H_N(x)=E\},\quad g_N(E):=\#\Gamma_N(E).\]
Define the exact-shell microcanonical ensemble on $\Gamma_N(E)$ and its entropy by
\[\mu^{\sh}_{N,E}(x):=g_N(E)^{-1}\one_{\Gamma_N(E)}(x),\quad R^{\sh}_{N,E}:=\log g_N(E).\]
\end{definition}

It is clear that the exact-shell microcanonical ensemble is independent of $\beta$.

\begin{proposition}\label{prop:exact-conditioning}
For every $\beta>0$ and every reachable $E$,
\[\mu^{\sh}_{N,E}=\mu^{\can}_{N,\beta}(\cdot|H_N=E).\]
\end{proposition}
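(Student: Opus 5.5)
The plan is to compute the conditional law directly from the product form of the canonical measure; the only structural input is that $\mu^{\can}_{N,\beta}$ is constant on every level set of $H_N$. First I will observe that, for $x=(x_1,\dots,x_N)\in\Lambda^N$,
\[\mu^{\can}_{N,\beta}(x)=\prod_{i=1}^N Z(\beta)^{-1}e^{-\beta H(x_i)}=Z(\beta)^{-N}e^{-\beta H_N(x)}.\]
Thus the weight of $x$ depends on $x$ only through its total energy $H_N(x)$.

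Next I will check that the conditioning event has positive probability, so that elementary conditioning is legitimate. Since $E$ is reachable, $\Gamma_N(E)\neq\emptyset$. Each $p_\beta(x_i)>0$, so every point of $\Lambda^N$ carries positive mass. Moreover $\Gamma_N(E)$ is finite, because it lies in the finite set $\{x\in\Lambda^N: H(x_i)\le E \text{ for all } i\}$; this uses $H\ge0$ and the finiteness of sublevel sets noted in \S\ref{sec:one-model}. Hence
\[\mu^{\can}_{N,\beta}(H_N=E)=g_N(E)\,Z(\beta)^{-N}e^{-\beta E}\in(0,\infty).\]

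Finally, for any $x\in\Lambda^N$ we have
\[\mu^{\can}_{N,\beta}(x\mid H_N=E)=\frac{\mu^{\can}_{N,\beta}(x)\one_{\Gamma_N(E)}(x)}{\mu^{\can}_{N,\beta}(H_N=E)}.\]
On $\Gamma_N(E)$ the factor $Z(\beta)^{-N}e^{-\beta E}$ in the numerator cancels the same factor in the denominator. What remains is $g_N(E)^{-1}\one_{\Gamma_N(E)}(x)=\mu^{\sh}_{N,E}(x)$, and since the result holds pointwise on a countable space, the two measures agree. The right-hand side does not involve $\beta$, which also recovers the remark that the exact-shell ensemble is independent of $\beta$.

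There is no serious obstacle here. The only point needing care is the finiteness and positivity of $g_N(E)$, which guarantees that the conditioning is well defined.
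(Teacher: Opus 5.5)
Your proof is correct and follows the same approach as the paper: the canonical weight $Z(\beta)^{-N}e^{-\beta H_N(x)}$ is constant on $\Gamma_N(E)$, so conditioning gives the uniform measure $g_N(E)^{-1}\one_{\Gamma_N(E)}$. Your check that $0<g_N(E)<\infty$, using $H\geq0$ and the finiteness of sublevel sets, makes explicit a point the paper leaves implicit.
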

\begin{proof}
Every $x\in\Gamma_N(E)$ has canonical mass $Z(\beta)^{-N}e^{-\beta E}$.  Conditioning therefore assigns the common mass $g_N(E)^{-1}$ to every point of the shell.
\end{proof}

This proposition illustrates how to derive the (exact-shell) microcanonical ensemble from the canonical ensemble, consistent with the standard approach in classical thermodynamics.

\subsubsection{The Windowed Microcanonical Ensemble}
The model we shall give is primarily intended to handle nonlattice $H(\Lambda)$. Of course, it can also deal with lattice ones in practice.

\begin{definition}[Windowed Microcanonical Ensemble]\label{def:window}
Let $\Delta>0$. For $E\in\R$ and $N\geq1$, denote
\[\Omega_N(E,\Delta):=\{x\in\Lambda^N:E\leq H_N(x)<E+\Delta\},\quad G_N(E,\Delta):=\#\Omega_N(E,\Delta).\]
Whenever $G_N(E,\Delta)>0$, define the windowed microcanonical ensemble as the uniform measure
\[\mu^{\mc}_{N,E,\Delta}(x):=G_N(E,\Delta)^{-1}\one_{\Omega_N(E,\Delta)}(x)\]
on $\Omega_N(E,\Delta)$, and define its entropy as $R_{N,E,\Delta}^{\mc}:=\log G_N(E,\Delta)$.
\end{definition}

The set $\Omega_N(E,\Delta)$ is finite by positive definiteness.  The measure in Definition~\ref{def:window} is also independent of $\beta$.  By contrast, the conditional canonical measure on the same window assigns weights proportional to $e^{-\beta H_N(x)}$ and is therefore not uniform unless the window contains only one energy value.  This distinction prevents a circular definition of the microcanonical ensemble.

\section{Exact-shell Ensembles and Arithmetic Constraints}\label{sec:lattice-exact}
In this section, we study the thermodynamic limit behavior of the exact-shell microcanonical ensemble. In particular, if $H(\Lambda)$ is lattice, we establish the equivalence between the canonical and (exact-shell) microcanonical ensembles in the sense of total variation.

The images of $H$ generate an abelian group
\[G_H:=\langle H(x):x\in\Lambda\rangle_{\Z}\subseteq\R.\]
Since $H$ is a quadratic form on the finitely generated group $\Lambda$, the abelian group $G_H$ is torsion free and finitely generated. Fix a $\Z$-basis $\{a_1,\cdots,a_r\}$ of $G_H$, where $r=\mathrm{rank}_{\Z}(G_H)$. It is easy to see that $H(\Lambda)$ is lattice if and only if $r=1$.

The quadratic form $H$ uniquely induces a family of quadratic forms
\[F_1,\cdots,F_r:\Lambda\longrightarrow\Z\]
characterized by
\[H(x)=\sum_{j=1}^ra_jF_j(x).\]
Indeed, if $\{e_1,\cdots,e_d\}$ is a $\Z$-basis of $\Lambda$, we have
\[H\left(\sum_{i=1}^dn_ie_i\right)=\sum_{i=1}^dn_i^2H(e_i)+\sum_{i<j}n_in_j(H(e_i+e_j)-H(e_i)-H(e_j)),\]
expanding the coefficients in the basis $\{a_1,\cdots,a_r\}$ yields the quadratic forms $F_1,\cdots,F_r$, and the expansion is unique.

For $\lambda=(\lambda_1,\cdots,\lambda_r)\in\R^r$, define a quadratic form induced by $H$ and $\lambda$,
\[Q_{H,\lambda}:\Lambda\longrightarrow\R,\quad x\longmapsto\sum_{j=1}^r\lambda_jF_j(x),\]
consider
\[\mathcal{D}:=\left\{\lambda\in\R^r:Q_{H,\lambda}\text{ is positive definite}\right\}.\]
\begin{proposition}
Write $\mathcal{Z}(\lambda):=\sum_{x\in\Lambda}e^{-Q_{H,\lambda}(x)}$, then the set $\mathcal{D}$ is a non-empty open set, and it equals the interior of $\{\lambda\in\R^r:\mathcal{Z}(\lambda)<+\infty\}$.
\end{proposition}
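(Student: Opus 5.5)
The plan is to prove three things in turn: $\mathcal{D}$ is non-empty, $\mathcal{D}$ is open, and $\mathcal{D}$ coincides with the interior of the convergence domain $\mathcal{C}:=\{\lambda\in\R^r:\mathcal{Z}(\lambda)<+\infty\}$.

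\emph{Non-emptiness.} Since $\{a_1,\cdots,a_r\}$ is a $\Z$-basis of $G_H\subseteq\R$, the vector $a=(a_1,\cdots,a_r)$ satisfies $Q_{H,a}=\sum_ja_jF_j=H$. As $H$ is positive definite, $a\in\mathcal{D}$.

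\emph{Openness.} Extend each $F_j$ to a real quadratic form $\tilde F_j$ on $V=\Lambda\otimes_{\Z}\R$. This extension exists because $F_j$ is an integral quadratic form on $\Lambda$, given by the explicit polynomial in the coordinates $n_i$ from the expansion above. Then $\lambda\mapsto\tilde Q_\lambda:=\sum_j\lambda_j\tilde F_j$ is linear in $\lambda$. Positive definiteness of $Q_{H,\lambda}$ on $\Lambda$ should be read as positive definiteness of $\tilde Q_\lambda$ on $V$. Since $\Lambda$ spans $V$ and $\tilde Q_\lambda$ is homogeneous, this is the natural meaning. If instead one only assumes $Q_{H,\lambda}(x)>0$ for $x\in\Lambda\setminus\{0\}$, the statement can fail for irrational directions, so I will adopt the $V$-interpretation, which matches the norm setting. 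Fix a Euclidean norm $|\cdot|$ on $V$ and write $c(\lambda):=\min_{|v|=1}\tilde Q_\lambda(v)$. This is a continuous (indeed concave) function of $\lambda$, since $|\tilde Q_\lambda(v)-\tilde Q_{\lambda'}(v)|\le C|\lambda-\lambda'|$ on the unit sphere. Hence $\mathcal{D}=\{c>0\}$ is open.

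\emph{$\mathcal{D}\subseteq\mathrm{int}\,\mathcal{C}$.} If $c(\lambda)>0$, then $Q_{H,\lambda}(x)\ge c(\lambda)|x|^2$. Comparing with a Gaussian sum over a lattice, $\sum_{x\in\Lambda}e^{-c|x|^2}<\infty$, so $\lambda\in\mathcal{C}$. Thus the open set $\mathcal{D}$ lies in $\mathcal{C}$, hence in its interior.

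\emph{$\mathrm{int}\,\mathcal{C}\subseteq\mathcal{D}$.} This is the main obstacle. Suppose $\lambda\in\mathrm{int}\,\mathcal{C}$ but $c(\lambda)\le0$. I first reduce to the case $c(\lambda)<0$. The vector $a$ satisfies $\tilde Q_a=\|\cdot\|^2$, so $c(\lambda-\varepsilon a)\le c(\lambda)-\varepsilon c_0<0$, where $c_0=\min_{|v|=1}\|v\|^2>0$. For small $\varepsilon$ the point $\lambda-\varepsilon a$ still lies in $\mathcal{C}$. So it suffices to show that $c(\mu)<0$ forces $\mathcal{Z}(\mu)=\infty$. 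Pick $v\in V$ with $\tilde Q_\mu(v)<0$. By continuity, $\tilde Q_\mu<-\delta|w|^2$ on an open cone $K$ around $v$. This cone contains infinitely many lattice points, because $\Lambda$ is a full-rank lattice and $K$ contains balls of arbitrarily large radius. For each such $x\in\Lambda\cap K$ we have $e^{-Q_{H,\mu}(x)}>1$, so the series diverges and $\mu\notin\mathcal{C}$. This gives a contradiction, so $\mathrm{int}\,\mathcal{C}=\mathcal{D}$.

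The delicate points are the boundary case $c(\lambda)=0$, handled by the perturbation along $a$, and fixing the correct meaning of positive definiteness, namely on $V$ rather than only on lattice points.
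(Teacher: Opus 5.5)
Your proof is correct and follows essentially the same route as the paper. Both arguments use $a=(a_1,\cdots,a_r)$ for non-emptiness, openness of positive definiteness, Gaussian comparison to get $\mathcal{D}$ inside the interior of the convergence domain, and the perturbation $\lambda-\varepsilon a$ to push a non-definite $\lambda$ to points where $\mathcal{Z}=+\infty$. The differences are minor: you get divergence from infinitely many lattice points in a negative cone rather than from the multiples $nx$ of one lattice point, and you state explicitly (and correctly) that positive definiteness must be read on $V$, which the paper uses only implicitly.
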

\begin{proof}
$\mathcal{D}$ is non-empty because $(a_1,\cdots,a_r)\in\mathcal{D}$. Moreover, positive definiteness is an open condition on the coefficients of a quadratic form, so $\mathcal{D}$ is open.

Let $\mathcal{S}:=\{\lambda\in\R^r:\mathcal{Z}(\lambda)<+\infty\}$. If $\lambda\in\mathcal{D}$, then there exists $C>0$ such that $Q_{H,\lambda}(x)\geq C\|x\|^2$
for all $x\in V=\Lambda\otimes_{\Z}\R$. Therefore
\[\mathcal{Z}(\lambda)=\sum_{x\in\Lambda}e^{-Q_{H,\lambda}(x)}\leq\sum_{x\in\Lambda}e^{-C\|x\|^2}<+\infty.\]
So $\mathcal{D}\subseteq\mathcal{S}$. Since $\mathcal{D}$ is open, we have $\mathcal{D}\subseteq\mathcal{S}^{\circ}$.

Conversely, suppose that $\lambda\notin\mathcal{D}$. Then there exists $v\in V\setminus\{0\}$ such that $Q_{H,\lambda}(v)\leq0$. For every $\varepsilon>0$, consider $\lambda_{\varepsilon}:=\lambda-\varepsilon(a_1,\cdots,a_r)$. Since $Q_{H,(a_1,\cdots,a_r)}=H$ and $H(v)>0$, we have
\[Q_{H,\lambda_{\varepsilon}}(v)=Q_{H,\lambda}(v)-\varepsilon H(v)<0.\]
Since $\Lambda\otimes_{\Z}\Q$ is dense in $V$, there exists a nonzero $x\in\Lambda$ such that $Q_{H,\lambda_{\varepsilon}}(x)<0$. Hence,
\[\mathcal{Z}(\lambda_{\varepsilon})\geq\sum_{n\geq1}e^{-Q_{H,\lambda_{\varepsilon}}(nx)}=\sum_{n\geq1}e^{-n^2Q_{H,\lambda_{\varepsilon}}(x)}=+\infty,\]
which means $\lambda_{\varepsilon}\notin\mathcal{S}$ for every $\varepsilon>0$. Since $\lambda_{\varepsilon}\to\lambda$ as $\varepsilon\downarrow0$, it follows that $\lambda\notin\mathcal{S}^{\circ}$. Therefore $\mathcal{S}^{\circ}\subseteq\mathcal{D}$. Consequently, $\mathcal{D}=\mathcal{S}^{\circ}$.
\end{proof}

Similar to \S\ref{sec:one-model}, for $\lambda\in\mathcal{D}$, we define
\[\bm{\psi}(\lambda):=\log\mathcal{Z}(\lambda),\quad\bm{p}_{\lambda}(x):=\mathcal{Z}(\lambda)^{-1}e^{-Q_{H,\lambda}(x)}\]
and
\[\bm{m}(\lambda):=\E_{\bm{p}_{\lambda}}[(F_1,\cdots,F_r)],\quad\bm{\sigma}^2(\lambda):=\mathrm{Cov}_{\bm{p}_{\lambda}}[(F_1,\cdots,F_r)].\]
Note that $p_{\beta}=\bm{p}_{\beta(a_1,\cdots,a_r)}$ if $\beta(a_1,\cdots,a_r)\in\mathcal{D}$. Like Proposition~\ref{prop:thermodynamic-functions}, it is easy to show $\mathcal{Z}$, $\bm{\psi}$ are real analytic on $\mathcal{D}$, and $\nabla\bm{\psi}(\lambda)=-\bm{m}(\lambda)$, $\nabla^2\bm{\psi}(\lambda)=\bm{\sigma}^2(\lambda)$.

Likewise, the canonical ensemble as in \S\ref{sec:canonical-ensemble} can be obtained through copies of a single particle. Regarding the (exact-shell) microcanonical ensemble, we present the following high-dimensional generalization of Definition~\ref{def:exact-shell}.

\begin{lemma}\label{lem:equations}
Let $N\geq1$, suppose $n=(n_1,\cdots,n_r)\in\Z^r$ and write $x=(x_1,\cdots,x_N)\in\Lambda^N$, we have
\[\left\{x\in\Lambda^N:H_N(x)=\sum_{j=1}^ra_jn_j\right\}=\left\{x\in\Lambda^N:\sum_{i=1}^NF_j(x_i)=n_j,1\leq j\leq r\right\}.\]
\end{lemma}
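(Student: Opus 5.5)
The plan is to prove the two inclusions separately; the reverse inclusion is formal and the forward inclusion uses the $\Z$-linear independence of $a_1,\cdots,a_r$.

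For ``$\supseteq$'', suppose $\sum_{i=1}^NF_j(x_i)=n_j$ for every $j$. I will use the defining identity $H(x_i)=\sum_{j=1}^ra_jF_j(x_i)$ for each $i$, sum over $i$, and interchange the two finite sums:
\[H_N(x)=\sum_{i=1}^N\sum_{j=1}^ra_jF_j(x_i)=\sum_{j=1}^ra_j\sum_{i=1}^NF_j(x_i)=\sum_{j=1}^ra_jn_j.\]

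For ``$\subseteq$'', suppose $H_N(x)=\sum_ja_jn_j$. Set $m_j:=\sum_{i=1}^NF_j(x_i)$. Each $m_j$ lies in $\Z$, because each $F_j$ takes values in $\Z$. The same computation as above gives
\[\sum_{j=1}^ra_jm_j=H_N(x)=\sum_{j=1}^ra_jn_j,\]
hence $\sum_ja_j(m_j-n_j)=0$ with integer coefficients $m_j-n_j$. Since $\{a_1,\cdots,a_r\}$ is a $\Z$-basis of the free abelian group $G_H$, the $a_j$ are $\Z$-linearly independent. It follows that $m_j=n_j$ for all $j$, which is exactly the desired system of equations.

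There is no real obstacle in either direction. The only point needing care is to use integrality of the $F_j$, which makes the coefficients $m_j-n_j$ integers. Linear independence over $\Z$ (not over $\Q$ or $\R$) is exactly what the basis property supplies. Linear independence over $\R$ would be false in general, for instance when some $a_j$ are rationally related to one another.
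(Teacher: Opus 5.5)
Your proof is correct and is essentially the paper's argument: both rewrite $H_N(x)=\sum_j a_jn_j$ as $\sum_j a_j\bigl(\sum_iF_j(x_i)-n_j\bigr)=0$ with integer coefficients and conclude from the independence of the basis $a_1,\cdots,a_r$. The paper phrases this as $\Q$-linear independence, which is equivalent to your $\Z$-independence.

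One small correction to your closing aside. $\Z$- and $\Q$-independence coincide for real numbers, so ``not over $\Q$'' should be dropped. Also, elements of a $\Z$-basis can never be rationally related, so your example is not the right one. The correct reason $\R$-independence fails is simply that any two nonzero reals, e.g.\ $1$ and $\sqrt2$, are $\R$-dependent.
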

\begin{proof}
The elements $a_1,\cdots,a_r$ are linearly independent over $\Q$, so $H_N(x)=\sum_{j=1}^ra_jn_j$ is equivalent to
\[\sum_{j=1}^ra_j\left(\sum_{i=1}^NF_j(x_i)-n_j\right)=0,\]
and hence to the $r$ equalities $\sum_{i=1}^NF_j(x_i)=n_j$.
\end{proof}

Hereafter, we always assume that the set in Lemma~\ref{lem:equations} is non-empty, which corresponds to the reachable condition in Definition~\ref{def:exact-shell}.

In order to state the first theorem of this section, we introduce the notion of total variation. For fixed $k$ and $N\geq k$, let $\pi_k:\Lambda^N\to\Lambda^k$ denote the projection onto the first $k$ coordinates. For probability measures on a countable set, we define
\[\|\mu-\nu\|_{\TV}:=\sup_A|\mu(A)-\nu(A)|=\frac{1}{2}\sum_x|\mu(x)-\nu(x)|.\]

\begin{theorem}\label{thm:h-equivalent}
Let $\lambda\in\mathcal{D}$. Assume that $\langle(F_1(x),\cdots,F_r(x)):x\in\Lambda\rangle_{\Z}=\Z^r$. For $N\geq1$, suppose there are $\lambda_N\in\mathcal{D}$ such that $\lambda_N\to\lambda$, $N\to+\infty$. Let $n_N=(n_{N,1},\cdots,n_{N,r})\in\Z^r$ be reachable such that
\[\Xi_N:=n_N-N\bm{m}(\lambda_N)=O(1),\quad N\to+\infty.\]
Consider the exact-shell microcanonical ensemble $\mu_{N,E_N}^{\sh}$ on $\Gamma_N(E_N)$, where $E_N=\sum_{j=1}^ra_jn_{N,j}$. For every $k\geq1$,
\begin{equation}\label{eq:tv-convergence}
\left\|(\pi_k)_*\mu^{\sh}_{N,E_N}-\bm{p}_{\lambda}^{\otimes k}\right\|_{\TV}\to0,\quad N\to+\infty.
\end{equation}
\end{theorem}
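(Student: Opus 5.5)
The plan is to write the exact-shell law as a conditioned product measure, reduce the marginal to a ratio of local probabilities, and evaluate that ratio with a parameter-uniform multivariate local central limit theorem on $\Z^r$.

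\textbf{Conditioning identity.} By Lemma~\ref{lem:equations}, $\Gamma_N(E_N)$ is the set where $S_N:=\sum_{i=1}^N F(x_i)$ equals $n_N$, with $F=(F_1,\cdots,F_r)$. For every $\theta\in\mathcal{D}$, each point of this set has $\bm{p}_{\theta}^{\otimes N}$-mass $\mathcal{Z}(\theta)^{-N}e^{-\theta\cdot n_N^T}$. This is the analogue of Proposition~\ref{prop:exact-conditioning}, and it gives $\mu^{\sh}_{N,E_N}=\bm{p}_{\theta}^{\otimes N}(\cdot\,|\,S_N=n_N)$. I choose $\theta=\lambda_N$. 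For $y=(y_1,\cdots,y_k)\in\Lambda^k$, write $F(y):=\sum_{i\le k}F(y_i)$. Then
\[(\pi_k)_*\mu^{\sh}_{N,E_N}(y)=\bm{p}_{\lambda_N}^{\otimes k}(y)\,\frac{P_{\lambda_N}\bigl(S_{N-k}=n_N-F(y)\bigr)}{P_{\lambda_N}(S_N=n_N)}.\]
Since $\bm{p}_{\lambda_N}^{\otimes k}\to\bm{p}_\lambda^{\otimes k}$ pointwise (by continuity of $\mathcal{Z}$ on $\mathcal{D}$) and both are probability measures, Scheffé's lemma gives $\|\bm{p}_{\lambda_N}^{\otimes k}-\bm{p}_{\lambda}^{\otimes k}\|_{\TV}\to0$. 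So it suffices to show that the ratio tends to $1$ for each fixed $y$. A second application of Scheffé's lemma, to the marginals of $\mu^{\sh}$ against $\bm{p}_{\lambda_N}^{\otimes k}$, which converge pointwise to the fixed limit $\bm{p}_\lambda^{\otimes k}$, then upgrades this pointwise convergence to convergence in total variation.

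\textbf{Uniform LCLT.} The key tool is a local central limit theorem on $\Z^r$ that is uniform for $\theta$ in a compact neighbourhood $K\subset\mathcal{D}$ of $\lambda$. It asserts that, uniformly in $m\in\Z^r$ and $\theta\in K$,
\[(2\pi M)^{r/2}\det\bm{\sigma}^2(\theta)^{1/2}\,P_\theta(S_M=m)=\exp\Bigl(-\tfrac{1}{2M}(m-M\bm{m}(\theta))\bm{\sigma}^2(\theta)^{-1}(m-M\bm{m}(\theta))^T\Bigr)+o(1).\]
I would prove it by following the characteristic-function argument of \cite[Chapter 2]{Lawler}, checking three uniform inputs:
\begin{compactenum}
\item Third moments of $F$ are bounded uniformly on $K$, because $e^{-Q_{H,\theta}}$ is dominated by $e^{-c\|x\|^2}$ uniformly on $K$. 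This gives a uniform Taylor expansion of $\phi_\theta(t)=\E_\theta e^{it\cdot F^T}$ near $0$.
\item $\bm{\sigma}^2(\theta)$ is positive definite and continuous, hence uniformly nondegenerate on $K$. Nondegeneracy holds because the values $F(x)$, including $F(0)=0$, generate $\Z^r$, so $F$ does not lie in an affine hyperplane.
\item $\sup_{\theta\in K,\ \delta\le|t|_\infty\le\pi}|\phi_\theta(t)|<1$. Aperiodicity is exactly the hypothesis $\langle F(\Lambda)\rangle_\Z=\Z^r$ together with $0\in F(\Lambda)$: $|\phi_\theta(t)|=1$ would force $t\cdot F(x)\in2\pi\Z$ for all $x$, hence $t\in2\pi\Z^r$. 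Since $(\theta,t)\mapsto|\phi_\theta(t)|$ is continuous on the compact set $K\times\{\delta\le|t|_\infty\le\pi\}$, the bound is uniform.
\end{compactenum}
I expect this third point, and more generally making every estimate in the LCLT uniform in $\theta$, to be the main technical obstacle.

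\textbf{Conclusion.} Apply the LCLT with $M=N$, $m=n_N$, $\theta=\lambda_N$; the centred point is $\Xi_N=O(1)$, so the exponent is $O(1/N)$. Apply it again with $M=N-k$, $m=n_N-F(y)$; the centred point is $\Xi_N-F(y)+k\bm{m}(\lambda_N)=O(1)$ for fixed $y$. In both cases the Gaussian factor is $1+o(1)$. The prefactors $(N-k)^{r/2}$ and $N^{r/2}$ have ratio tending to $1$, and the determinant factors cancel. Hence the ratio of local probabilities tends to $1$ for every fixed $y$, which by the first step proves \eqref{eq:tv-convergence}.
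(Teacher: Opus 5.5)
Your proposal is correct and follows the paper's proof essentially step for step. You use the same conditioning identity with $\theta=\lambda_N$, the same ratio formula for the $k$-marginal, and a parameter-uniform $r$-dimensional LCLT (Proposition~\ref{prop:h-lclt}, which the paper proves from exactly your three uniform inputs) applied at $O(1)$ displacements, followed by Scheff\'e's lemma. The only cosmetic difference is in the uniform moment bound: you use the Gaussian domination $e^{-Q_{H,\theta}(x)}\le e^{-c\|x\|^2}$ on $K$, while the paper bounds exponential moments via the shifted compact set $K_\eta$.
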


The proof of this theorem requires a high-dimensional parameter-uniform local central limit theorem for lattice-valued random variables. For convenience, and because this result differs from the conventional local central limit theorem with a fixed distribution, we restate it along with its proof.

\begin{proposition}[Uniform $r$-LCLT]\label{prop:h-lclt}
Assume $\langle(F_1(x),\cdots,F_r(x)):x\in\Lambda\rangle_{\Z}=\Z^r$. For $\lambda\in\mathcal{D}$, let $X_{1,\lambda},X_{2,\lambda},\cdots$ be independent copies of a random variable with law $\bm{p}_{\lambda}$, and let
\[S_{N,\lambda}:=\sum_{i=1}^N(F_1(X_{i,\lambda}),\cdots,F_r(X_{i,\lambda}))\]
be a random vector. Then, for every compact set $K\Subset\mathcal{D}$,
\[\sup_{\lambda\in K}\sup_{n\in\Z^r}\left|N^{\frac{r}{2}}\Prob_{\bm{p}_{\lambda}}(S_{N,\lambda}=n)-\frac{\exp\left(-\frac{1}{2}\Xi_{N,\lambda,n}\bm{\sigma}^2(\lambda)^{-1}\Xi^T_{N,\lambda,n}\right)}{(2\pi)^{\frac{r}{2}}\sqrt{\det\bm{\sigma}^2(\lambda)}}\right|\to0,\quad N\to+\infty,\]
where $\Xi_{N,\lambda,n}:=\frac{n-N\bm{m}(\lambda)}{\sqrt{N}}$.
\end{proposition}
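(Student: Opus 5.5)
The plan is the standard Fourier-inversion proof of the lattice LCLT, made uniform in $\lambda\in K$. Write $F=(F_1,\cdots,F_r)$ and let $\phi_\lambda(t):=\E_{\bm{p}_\lambda}[e^{i t\cdot F(X)^T}]$ for $t\in\R^r$. Since $S_{N,\lambda}$ is $\Z^r$-valued, inversion on the torus $[-\pi,\pi]^r$ gives
\[N^{\frac r2}\Prob(S_{N,\lambda}=n)=\frac{1}{(2\pi)^r}\int_{[-\pi\sqrt N,\pi\sqrt N]^r}e^{-i u\cdot(n-N\bm{m}(\lambda))^T/\sqrt N}\,\Bigl(e^{-iu\cdot\bm{m}(\lambda)^T/\sqrt N}\phi_\lambda(u/\sqrt N)\Bigr)^N du.\]
The Gaussian term in the statement is the same integral over $\R^r$ with the integrand replaced by $e^{-\frac12 u\bm{\sigma}^2(\lambda)u^T}$. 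So the difference is bounded, uniformly in $n$, by the $L^1$ norm of the difference of integrands, and the goal is to show this $L^1$ norm tends to $0$ uniformly in $\lambda\in K$. I split the integral into three regions: $|u|\le A$, then $A<|u|\le\delta\sqrt N$, then $\delta\sqrt N<|u|$ within the box. The Gaussian tail beyond $A$ is made small uniformly because $\bm{\sigma}^2$ is continuous and positive definite on $K$.

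\emph{Step 1: uniform moment control.} For $\lambda$ in a compact neighbourhood $K'\Subset\mathcal D$ of $K$, there is $C>0$ with $Q_{H,\lambda}(x)\ge C\|x\|^2$, and $|F(x)|\le C'\|x\|^2$. Hence $\sup_{\lambda\in K}\E_{\bm p_\lambda}|F|^3<\infty$, and $\bm{m},\bm{\sigma}^2$ are continuous.

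\emph{Step 2: non-degeneracy.} $\bm{\sigma}^2(\lambda)$ is positive definite. Indeed, if $v\cdot F(x)^T$ were a.s. constant, it would equal $v\cdot F(0)^T=0$ for all $x\in\Lambda$. Since $\bm p_\lambda$ has full support, the generation hypothesis would then force $v=0$. Consequently its smallest eigenvalue is bounded below on $K$. A Taylor expansion with the uniform third moment gives:
\begin{itemize}
\item $|e^{-it\cdot\bm m^T}\phi_\lambda(t)|\le e^{-c|t|^2}$ for $|t|\le\delta$, uniformly in $\lambda\in K$;
\item $N\log(e^{-iu\cdot\bm m^T/\sqrt N}\phi_\lambda(u/\sqrt N))\to-\frac12u\bm\sigma^2u^T$ uniformly on $|u|\le A$ and $\lambda\in K$.
\end{itemize}
This handles the first two regions: the middle one is dominated by $e^{-c|u|^2}$, which is small for large $A$.

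\emph{Step 3 (the main obstacle): the outer region.} I need $\sup_{\lambda\in K}\sup_{\delta\le|t|_\infty,\ t\in[-\pi,\pi]^r}|\phi_\lambda(t)|<1$. For a fixed $\lambda$, $|\phi_\lambda(t)|=1$ forces $t\cdot F(x)^T\in t\cdot F(0)^T+2\pi\Z=2\pi\Z$ for all $x$. By the hypothesis $\langle F(x)\rangle_\Z=\Z^r$, this gives $t\in2\pi\Z^r$, so $t=0$ in the box. To make this uniform, I note that $(\lambda,t)\mapsto|\phi_\lambda(t)|$ is continuous on the compact set $K\times\{\delta\le|t|_\infty\le\pi\}$; continuity in $\lambda$ follows from dominated convergence via Step 1. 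Hence the supremum is some $\rho<1$. The outer contribution of the characteristic-function integral is then at most $(2\pi)^{r}N^{r/2}\rho^N\to0$, and the Gaussian tail there is also negligible. Combining the three regions yields the claimed uniform convergence.
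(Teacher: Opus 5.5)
Your proposal is correct and follows essentially the same route as the paper: Fourier inversion on the torus, a uniform Gaussian bound near the origin from a Taylor expansion with uniformly bounded third moments, and a continuity/compactness argument using the generation hypothesis $\langle F(x)\rangle_{\Z}=\Z^r$ to get $\sup|\phi_\lambda|\le\rho<1$ away from the origin. The differences are cosmetic. You control moments via the uniform lower bound $Q_{H,\lambda}(x)\ge C\|x\|^2$ instead of the paper's exponential moments obtained by shifting $\lambda$ by $\pm\eta$. You should also make the outer region in Step 3 match the Euclidean cutoff used in your split, for example by taking $\{t\in[-\pi,\pi]^r:|t|\ge\delta\}$, which is still compact and excludes $0$.
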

\begin{proof}
First, we show that the covariance matrix is positive definite. Note that $t\bm{\sigma}^2(\lambda)t^T=\Var_{\bm{p}_{\lambda}}[t(F_1,\cdots,F_r)^T]\geq0$ for $t\in\R^r$. If $\Var_{\bm{p}_{\lambda}}[t(F_1,\cdots,F_r)^T]=0$, since $\bm{p}_{\lambda}$ has full support on $\Lambda$ and $F_j(0)=0$, we have $t(F_1,\cdots,F_r)^T\equiv0$. The assumption $\langle(F_1(x),\cdots,F_r(x)):x\in\Lambda\rangle_{\Z}=\Z^r$ then implies $t=0$. Thus $\bm{\sigma}^2(\lambda)$ is positive definite.

To describe the asymptotic behavior of the characteristic function, we show that the third absolute moments are uniformly bounded on $K$. Since $K\Subset\mathcal{D}$ and $\mathcal{D}$ is open, there exists $\eta>0$ such that
\[K_{\eta}:=\{\lambda-\eta\varepsilon:\lambda\in K,\varepsilon\in\{-1,1\}^r\}\Subset\mathcal{D}.\]
Let $\|\cdot\|_1$ denote the $1$-norm. We have
\begin{align*}
\E_{\bm{p}_{\lambda}}[e^{\eta\|(F_1,\cdots,F_r)\|_1}]&=\frac{1}{\mathcal{Z}(\lambda)}\sum_{x\in\Lambda}e^{-\lambda(F_1(x),\cdots,F_r(x))^T}e^{\eta\|(F_1(x),\cdots,F_r(x))\|_1}\\
&\leq\frac{1}{\mathcal{Z}(\lambda)}\sum_{\varepsilon\in\{-1,1\}^r}\sum_{x\in\Lambda}e^{-(\lambda-\eta\varepsilon)(F_1(x),\cdots,F_r(x))^T}\\
&=\frac{1}{\mathcal{Z}(\lambda)}\sum_{\varepsilon\in\{-1,1\}^r}\mathcal{Z}(\lambda-\eta\varepsilon).
\end{align*}
Note that $\mathcal{Z}\geq1$ and $\mathcal{Z}$ is continuous and bounded on the compact set $K_{\eta}$,
\[\sup_{\lambda\in K}\E_{\bm{p}_{\lambda}}[e^{\eta\|(F_1,\cdots,F_r)\|_1}]<+\infty.\]
It follows that, for every integer $i\geq1$, $\sup_{\lambda\in K}\E_{\bm{p}_{\lambda}}[\|(F_1,\cdots,F_r)\|^i]<+\infty$. Thus,
\[M_K:=\sup_{\lambda\in K}\E_{\bm{p}_{\lambda}}[\|(F_1,\cdots,F_r)-\bm{m}(\lambda)\|^3]<+\infty,\]
because $\|\bm{m}(\lambda)\|$ is uniformly bounded on $K$.

Consider the characteristic function $\phi_{\lambda}(t):=\E_{\bm{p}_{\lambda}}[e^{it(F_1,\cdots,F_r)^T}]$, $t\in\R^r$, and its centralization $\chi_{\lambda}(t):=e^{-it\bm{m}(\lambda)^T}\phi_{\lambda}(t)$. Now we have the following Taylor expansion
\[\chi_{\lambda}(t)=1-\frac{1}{2}t\bm{\sigma}^2(\lambda)t^T+R_{\lambda}(t),\]
where the remainder term $|R_{\lambda}(t)|\leq M_K\|t\|^3/6$. Hence, there exists $A_K>0$ such that
\[|\chi_{\lambda}(t)|^2\leq1-t\bm{\sigma}^2(\lambda)t^T+A_K\|t\|^3.\]
Note that there is $B_K>0$ such that $t\bm{\sigma}^2(\lambda)t^T\geq B_K\|t\|^2$ for every $\lambda\in K$ and every $t\in\R^r$, because $\lambda\mapsto\bm{\sigma}^2(\lambda)$ is continuous and $K$ is compact, this implies
\[|\chi_{\lambda}(t)|^2\leq1-B_K\|t\|^2+A_K\|t\|^3\leq1-\frac{B_K}{2}\|t\|^2\leq\exp\left(-\frac{B_K}{2}\|t\|^2\right)\]
for $\lambda\in K$ and $\|t\|\leq\frac{B_K}{2A_K}$. Therefore, $|\chi_{\lambda}(t)|\leq\exp(-\frac{B_K}{4}\|t\|^2)$, when $\|t\|\leq\delta_K:=\frac{B_K}{2A_K}$.

Next, we claim that $|\phi_{\lambda}(t)|<1$ for every $t\in[-\pi,\pi]^r\setminus\{0\}$ and every $\lambda\in\mathcal{D}$. Suppose
\[|\phi_{\lambda}(t)|=\left|\sum_{x\in\Lambda}\bm{p}_{\lambda}(x)e^{it(F_1(x),\cdots,F_r(x))^T}\right|=1.\]
Since this summation is a convex combination, with strictly positive coefficients, of complex numbers of modulus one, equality in the triangle inequality implies that all the $e^{it(F_1(x),\cdots,F_r(x))^T}$ are equal. Since $F_j(0)=0$, their common value is $1$. Thus, for $x\in\Lambda$,
\[t(F_1(x),\cdots,F_r(x))^T\in2\pi\Z.\]
Because $\langle(F_1(x),\cdots,F_r(x)):x\in\Lambda\rangle_{\Z}=\Z^r$, we obtain $t\in2\pi\Z^r$. Since $t\in[-\pi,\pi]^r$, this forces $t=0$, a contradiction. So far, we have $|\phi_{\lambda}(t)|<1$ on the compact set
\[W_K:=\{t\in[-\pi,\pi]^r:\|t\|\geq\delta_K\}.\]
Note that $(\lambda,t)\mapsto\phi_{\lambda}(t)$ is continuous, there exists $\rho_K<1$ such that $\sup_{\lambda\in K}\sup_{t\in W_K}|\phi_{\lambda}(t)|\leq\rho_K$. Since $|\chi_{\lambda}(t)|=|\phi_{\lambda}(t)|$, the same estimate holds for $\chi_{\lambda}$.

We then show
\begin{equation}\label{eq:sup-int}
\sup_{\lambda\in K}\int_{\R^r}|f_{N,\lambda}(t)-g_{\lambda}(t)|dt\to0,\quad N\to+\infty,
\end{equation}
where
\[f_{N,\lambda}(t):=\one_{[-\pi\sqrt{N},\pi\sqrt{N}]^r}(t)\chi_{\lambda}\left(\frac{t}{\sqrt{N}}\right)^N\quad\text{and}\quad g_{\lambda}(t):=\exp\left(-\frac{1}{2}t\bm{\sigma}^2(\lambda)t^T\right).\]
Fix a suitable $C>0$. If $\|t\|\leq C$, the uniform Taylor expansion gives
\[\chi_{\lambda}\left(\frac{t}{\sqrt{N}}\right)=1-\frac{1}{2N}t\bm{\sigma}^2(\lambda)t^T+O(N^{-3/2}),\quad N\to+\infty,\]
therefore,
\[N\log\chi_{\lambda}\left(\frac{t}{\sqrt{N}}\right)=-\frac{1}{2}t\bm{\sigma}^2(\lambda)t^T+O(N^{-1/2}),\quad N\to+\infty,\]
which means $\chi_{\lambda}(\frac{t}{\sqrt{N}})^N\to\exp(-\frac{1}{2}t\bm{\sigma}^2(\lambda)t^T)$ uniformly for $\lambda\in K$ and $\|t\|\leq C$. So
\begin{equation}\label{eq:sup-int-1}
\sup_{\lambda\in K}\int_{\|t\|\leq C}|f_{N,\lambda}(t)-g_{\lambda}(t)|dt\to0,\quad N\to+\infty.
\end{equation}
If $t\in[-\pi\sqrt{N},\pi\sqrt{N}]^r$ such that $\|t\|>\delta_K\sqrt{N}$, we have $|\chi_{\lambda}(\frac{t}{\sqrt{N}})|^N\leq\rho_K^N$, then
\[\sup_{\lambda\in K}\int_{t\in[-\pi\sqrt{N},\pi\sqrt{N}]^r,\|t\|>\delta_K\sqrt N}|f_{N,\lambda}(t)|dt\leq(2\pi\sqrt{N})^r\rho_K^N\to0,\quad N\to+\infty.\]
Recall that $g_{\lambda}(t)\leq\exp(-\frac{B_K}{2}\|t\|^2)$, as the integration domains decrease along a neighborhood basis at infinity, the integrals of $g_{\lambda}$ converge to $0$ uniformly in $\lambda\in K$. Since outside $[-\pi\sqrt{N},\pi\sqrt{N}]^r$ one has $f_{N,\lambda}\equiv0$, we obtain
\begin{equation}\label{eq:sup-int-2}
\sup_{\lambda\in K}\int_{\|t\|>\delta_K\sqrt{N}}|f_{N,\lambda}(t)-g_{\lambda}(t)|dt\to0,\quad N\to+\infty.
\end{equation}
For $C<\|t\|\leq\delta_K\sqrt{N}$, we already know
\[\left|\chi_{\lambda}\left(\frac{t}{\sqrt{N}}\right)\right|^N\leq\exp\left(-\frac{B_KN}{4}\left\|\frac{t}{\sqrt{N}}\right\|^2\right)=\exp\left(-\frac{B_K}{4}\|t\|^2\right),\]
so
\[\sup_{\lambda\in K}\int_{C<\|t\|\leq\delta\sqrt{N}}|f_{N,\lambda}(t)-g_{\lambda}(t)|dt\leq\int_{\|t\|>C}\left(\exp\left(-\frac{B_K}{4}\|t\|^2\right)+\exp\left(-\frac{B_K}{2}\|t\|^2\right)\right)dt.\] The right-hand side tends to $0$ as $C\to+\infty$. Combining this with \eqref{eq:sup-int-1} and \eqref{eq:sup-int-2}, and first letting $N\to+\infty$ and then $C\to+\infty$, proves \eqref{eq:sup-int}.

Finally, since $S_{N,\lambda}$ is $\Z^r$-valued, by the Fourier inversion formula,
\[\Prob_{\bm{p}_{\lambda}}(S_{N,\lambda}=n)=\frac{1}{(2\pi)^r}\int_{t\in[-\pi,\pi]^r}\phi_{\lambda}(t)^Ne^{-itn^T}dt=\frac{1}{(2\pi)^r}\int_{t\in[-\pi,\pi]^r}\chi_{\lambda}(t)^Ne^{-it(n-N\bm{m}(\lambda))^T}dt.\]
After the change of variables $s=\sqrt{N}t$, we obtain
\[N^{r/2}\Prob_{\bm{p}_{\lambda}}(S_{N,\lambda}=n)=\frac{1}{(2\pi)^r}\int_{\R^r}f_{N,\lambda}(s)e^{-is\Xi_{N,\lambda,n}^T}ds.\]
By \eqref{eq:sup-int}, when $N\to+\infty$,
\[\sup_{\lambda\in K}\sup_{n\in\Z^r}
\left|N^{r/2}\Prob_{\bm{p}_{\lambda}}(S_{N,\lambda}=n)-\frac{1}{(2\pi)^r}\int_{\R^r}g_{\lambda}(s)e^{-is\Xi_{N,\lambda,n}^T}ds\right|
\leq\frac{1}{(2\pi)^r}\sup_{\lambda\in K}\|f_{N,\lambda}-g_{\lambda}\|_{L^1(\R^r)}\to0.\]
It remains to compute the Fourier transform of the Gaussian kernel. Since $\bm{\sigma}^2(\lambda)$ is symmetric and positive definite, it has a symmetric positive-definite square root $\bm{\sigma}^2(\lambda)^{1/2}$. With the row-vector change of variables $u=s\bm{\sigma}^2(\lambda)^{1/2}$,
we obtain
\[\frac{1}{(2\pi)^r}\int_{\R^r}\exp\left(-\frac{1}{2}s\bm{\sigma}^2(\lambda)s^T\right)e^{-is\Xi_{N,\lambda,n}^T}ds
=\frac{1}{(2\pi)^r\sqrt{\det\bm{\sigma}^2(\lambda)}}\int_{\R^r}e^{-\frac{1}{2}uu^T}e^{-iu\bm{\sigma}^2(\lambda)^{-1/2}\Xi_{N,\lambda,n}^T}du.\]
The integral factors into $r$ one-dimensional Gaussian Fourier transforms. Therefore,
\[\frac{1}{(2\pi)^r}\int_{\R^r}\exp\left(-\frac{1}{2}s\bm{\sigma}^2(\lambda)s^T\right)e^{-is\Xi_{N,\lambda,n}^T}ds=
\frac{\exp\left(-\frac{1}{2}\Xi_{N,\lambda,n}\bm{\sigma}^2(\lambda)^{-1}\Xi_{N,\lambda,n}^T\right)}{(2\pi)^{\frac{r}{2}}\sqrt{\det\bm{\sigma}^2(\lambda)}}.\]
This completes the proof.
\end{proof}

\begin{proof}(of Theorem~\ref{thm:h-equivalent})
By Lemma~\ref{lem:equations}, we have
\[\Gamma_N(E_N)=\left\{x\in\Lambda^N:\sum_{i=1}^NF_j(x_i)=n_{N,j},1\leq j\leq r\right\}.\]
This shell corresponds to the event $\{S_{N,\lambda_N}=n_N\}$. By definition, on this event the product measure $\bm{p}_{\lambda_N}^{\otimes N}$ is constant. Hence,
\[\mu^{\sh}_{N,E_N}=\bm{p}_{\lambda_N}^{\otimes N}(\cdot|S_{N,\lambda_N}=n_N).\]
It follows that
\begin{equation}\label{eq:ratio}
(\pi_k)_*\mu^{\sh}_{N,E_N}(y)=\bm{p}_{\lambda_N}^{\otimes k}(y)\frac{\Prob_{\bm{p}_{\lambda_N}}(S_{N-k,\lambda_N}=n_N-A(y))}{\Prob_{\bm{p}_{\lambda_N}}(S_{N,\lambda_N}=n_N)},
\end{equation}
where $A(y):=\sum_{i=1}^k(F_1(y_i),\cdots,F_r(y_i))$, $y=(y_1,\cdots,y_k)\in\Lambda^k$. When $N\to+\infty$, $\lambda_N\to\lambda$, the denominator is centered because its displacement is $\Xi_N=O(1)$, while the displacement in the numerator is also
\[n_N-A(y)-(N-k)\bm{m}(\lambda_N)=\Xi_N+k\bm{m}(\lambda_N)-A(y)=O(1).\]
Note that the sequence $\{\lambda_N\}$ is compact, one can apply Proposition~\ref{prop:h-lclt} by considering $\Xi_{N,\lambda_N,n_N}$ and $\Xi_{N-k,\lambda_N,n_N-A}$. Therefore, it is easy to check the ratio in \eqref{eq:ratio} tends to $1$. Since
$\bm{p}_{\lambda_N}^{\otimes k}\to\bm{p}_{\lambda}^{\otimes k}$ as $N\to+\infty$, \eqref{eq:ratio} converges pointwise to $\bm{p}_{\lambda}^{\otimes k}$. By Scheff\'{e}'s lemma we obtain \eqref{eq:tv-convergence}.
\end{proof}

For $r\geq2$, an exact energy shell may therefore retain information that is invisible to the one-parameter canonical ensemble, see Example~\ref{ex:ex} for a concrete example. The special case $r=1$ is also required for our purposes. In this case, $H(\Lambda)$ is lattice and $G_H$ has $\Z$-rank $1$, so there is a unique maximal span $h>0$ characterized by
\[H(\Lambda)\subseteq h\Z,\quad\langle H(x)/h:x\in\Lambda\rangle_{\Z}=\Z.\]
The second condition is the aperiodicity of the normalized energy support. Now, for $\beta\in\mathcal{D}=(0,+\infty)$, the $X_{1,\beta},X_{2,\beta},\cdots$ above (in Proposition~\ref{prop:h-lclt}) then become independent copies of a random variable with law $p_{\beta}=\bm{p}_{\beta h}$. Let $S_{N,\beta}=\sum_{i=1}^NH(X_{i,\beta})$ be as before, we readily obtain:

\begin{corollary}[Uniform $1$-LCLT]\label{cor:1-lclt}
Let $K\Subset(0,+\infty)$ be a compact set and let $h$ be the maximal span of $H(\Lambda)$. Then
\[\sup_{\beta\in K}\sup_{E\in h\Z}\left|\sqrt{N}\Prob_{p_{\beta}}(S_{N,\beta}=E)-\frac{h}{\sqrt{2\pi\sigma^2(\beta)}}\exp\left(-\frac{(E-Nm(\beta))^2}{2N\sigma^2(\beta)}\right)\right|\to0,\quad N\to+\infty.\]
\end{corollary}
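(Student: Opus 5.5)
The plan is to specialize Proposition~\ref{prop:h-lclt} to $r=1$. When $H(\Lambda)$ is lattice with maximal span $h$, $G_H=h\Z$, so we may take the $\Z$-basis $a_1=h$ and write $F_1=H/h\colon\Lambda\to\Z$. The maximality of $h$ is exactly the hypothesis $\langle F_1(x):x\in\Lambda\rangle_{\Z}=\Z$ of Proposition~\ref{prop:h-lclt}. In this setting $\mathcal{D}=\{\lambda_1>0\}$ and $Q_{H,\lambda_1}=\lambda_1H/h$, so $p_\beta=\bm{p}_{\beta h}$. The map $\beta\mapsto\lambda=\beta h$ sends the compact set $K\Subset(0,+\infty)$ to the compact set $hK\Subset\mathcal{D}$. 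The random vector $S_{N,\lambda}$ of Proposition~\ref{prop:h-lclt} is then $S_{N,\beta}/h$, and the event $\{S_{N,\beta}=E\}$ with $E\in h\Z$ coincides with $\{S_{N,\lambda}=n\}$ for $n=E/h\in\Z$. As $E$ ranges over $h\Z$, $n$ ranges over all of $\Z$, so the supremum over $E$ matches the supremum over $n$.

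Next translate the parameters. We have $\bm{m}(\beta h)=\E_{p_\beta}[H/h]=m(\beta)/h$ and $\bm{\sigma}^2(\beta h)=\sigma^2(\beta)/h^2$. Hence
\[\Xi_{N,\lambda,n}=\frac{E-Nm(\beta)}{h\sqrt N},\qquad \frac{\Xi_{N,\lambda,n}^2}{\bm{\sigma}^2(\lambda)}=\frac{(E-Nm(\beta))^2}{N\sigma^2(\beta)},\qquad \frac{1}{\sqrt{2\pi\bm{\sigma}^2(\lambda)}}=\frac{h}{\sqrt{2\pi\sigma^2(\beta)}}.\]
Substituting these into the $r=1$ statement of Proposition~\ref{prop:h-lclt} gives exactly the expression in the corollary, with the supremum over $\beta\in K$ equal to the supremum over $\lambda\in hK$.

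The only step needing care is checking that the setup matches. One point is that the basis of $G_H$ is $\{h\}$ (or $\{-h\}$, which only changes $F_1$ by a sign), so $F_1$ takes values in $\Z$ and generates $\Z$. The other is that the whole of $\mathcal{D}=(0,+\infty)$ is covered, since positive definiteness of $\lambda_1H/h$ is equivalent to $\lambda_1>0$. No new analytic estimate is required.
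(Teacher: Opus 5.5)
Your proposal is correct and follows the paper's own route: specialize Proposition~\ref{prop:h-lclt} to $r=1$ with $a_1=h$, $F_1=H/h$ and $\lambda=\beta h$, so that $p_\beta=\bm{p}_{\beta h}$ and $hK\Subset\mathcal{D}$. Your explicit translation of $\bm{m}$, $\bm{\sigma}^2$ and $\Xi_{N,\lambda,n}$ fills in the computation that the paper only describes as ``readily obtained''.
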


Under the lattice-valued assumption, we establish the canonical-microcanonical equivalence for the exact-shell microcanonical ensembles.

\begin{theorem}[Exact-shell Equivalence]\label{thm:lattice-equivalence}
Suppose $H(\Lambda)$ is lattice with maximal span $h>0$. Let $E_N\in\im(H_N)$ satisfy $E_N/N\to u>0$, and set $\beta=m^{-1}(u)$. Then for every $k\geq1$,
\begin{equation}\label{eq:lattice-TV}
\left\|(\pi_k)_*\mu^{\sh}_{N,E_N}-p_{\beta}^{\otimes k}\right\|_{\TV}\to0,\quad N\to+\infty.
\end{equation}
Moreover, let $u_N=E_N/N$ and $\beta_N=m^{-1}(u_N)$, then
\begin{equation}\label{eq:lattice-sharp-count}
g_N(E_N)=\frac{hZ(\beta_N)^Ne^{\beta_NE_N}}{\sqrt{2\pi N\sigma^2(\beta_N)}}(1+o(1)).
\end{equation}
Consequently, the thermodynamic limit
\begin{equation}\label{eq:lattice-entropy-limit}
\lim_{N\to+\infty}\frac{1}{N}R^{\sh}_{N,E_N}=\psi(\beta)+\beta u=s(u).
\end{equation}
\end{theorem}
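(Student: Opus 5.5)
The plan is to reduce to the rank-one case and apply Theorem~\ref{thm:h-equivalent} together with Corollary~\ref{cor:1-lclt}. Since $H(\Lambda)$ is lattice, $r=1$, and we may take $a_1=h$ and $F_1=H/h$, so that $\langle F_1(x):x\in\Lambda\rangle_{\Z}=\Z$. Then $\mathcal D=(0,+\infty)$ and $\bm p_{\theta}=p_{\theta/h}$. The vector mean becomes $\bm m(\theta)=m(\theta/h)/h$.

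\textbf{Parameters.} I set $\beta_N=m^{-1}(u_N)$, which is well defined because $m$ is a bijection by Proposition~\ref{prop:thermodynamic-functions}. Since $m^{-1}$ is continuous and $u_N\to u$, we get $\beta_N\to\beta$. I then take $\lambda_N=h\beta_N$ and $n_N=E_N/h\in\Z$. With these choices
\[
\Xi_N=n_N-N\bm m(\lambda_N)=\frac{E_N-Nm(\beta_N)}{h}=0.
\]
Reachability is the hypothesis $E_N\in\im(H_N)$. Theorem~\ref{thm:h-equivalent} then gives \eqref{eq:lattice-TV} directly, with $\bm p_{h\beta}=p_\beta$.

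\textbf{Sharp count.} I use Proposition~\ref{prop:exact-conditioning} with the parameter $\beta_N$. Every point of $\Gamma_N(E_N)$ has mass $Z(\beta_N)^{-N}e^{-\beta_NE_N}$, so
\[
\Prob_{p_{\beta_N}}(S_{N,\beta_N}=E_N)=g_N(E_N)\,Z(\beta_N)^{-N}e^{-\beta_NE_N}.
\]
The sequence $\beta_N$ lies in a compact $K\Subset(0,+\infty)$ containing $\beta$. Applying Corollary~\ref{cor:1-lclt} at $E=E_N\in h\Z$, where the Gaussian exponent vanishes because $E_N=Nm(\beta_N)$, gives
\[
\sqrt N\,\Prob_{p_{\beta_N}}(S_{N,\beta_N}=E_N)=\frac{h}{\sqrt{2\pi\sigma^2(\beta_N)}}+o(1).
\]
Since $\sigma^2$ is continuous and positive on $K$, the leading term is bounded away from zero. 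The additive $o(1)$ therefore becomes a multiplicative factor $(1+o(1))$, and solving for $g_N(E_N)$ yields \eqref{eq:lattice-sharp-count}.

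\textbf{Entropy limit.} I take logarithms in \eqref{eq:lattice-sharp-count} and divide by $N$:
\[
\frac1N\log g_N(E_N)=\psi(\beta_N)+\beta_Nu_N+O\!\left(\frac{\log N}{N}\right).
\]
The error term comes from the factors $h$, $\sqrt{2\pi N\sigma^2(\beta_N)}$ and $(1+o(1))$, each of which contributes $O(\log N)$ or $O(1)$ before dividing by $N$. Continuity of $\psi$ gives the limit $\psi(\beta)+\beta u$. This equals $s(u)$ because $\beta=m^{-1}(u)$ is the minimizer in the Legendre transform (Proposition~\ref{cor:duality}).

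\textbf{Main obstacle.} The main care point is the uniformity of the local limit theorem along the moving parameter $\beta_N$. This is handled by the $\sup_{\beta\in K}$ in Corollary~\ref{cor:1-lclt}. I also need the additive error to be negligible relative to the main term, which holds because $\sigma^2$ is bounded on $K$.
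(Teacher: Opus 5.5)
Your proposal is correct and follows essentially the paper's own proof. The total-variation claim is read off from Theorem~\ref{thm:h-equivalent} in the rank-one case. The sharp count comes from $\Prob_{p_{\beta_N}}(S_{N,\beta_N}=E_N)=g_N(E_N)Z(\beta_N)^{-N}e^{-\beta_NE_N}$ combined with Corollary~\ref{cor:1-lclt} at the centered point $E_N=Nm(\beta_N)$, followed by taking logarithms. Your explicit choices $\lambda_N=h\beta_N$ and $n_N=E_N/h$ (so $\Xi_N=0$ and $\bm{p}_{h\beta}=p_\beta$), and your check that the additive $o(1)$ becomes multiplicative, spell out what the paper leaves implicit.
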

\begin{proof}
Since $r=1$, \eqref{eq:lattice-TV} follows immediately from Theorem~\ref{thm:h-equivalent}. For every $E\in h\Z$,
\begin{equation}\label{eq:lattice-probability-count}
\Prob_{p_{\beta}}(S_{N,\beta}=E)=g_N(E)Z(\beta)^{-N}e^{-\beta E},
\end{equation}
where $g_N(E)=0$ when $E$ is not reachable. Since the sequence $\{\beta_N\}$ is compact, at the centered point $E_N=Nm(\beta_N)$ one has
\[\Prob_{p_{\beta_N}}(S_{N,\beta_N}=E_N)=\frac{h+o(1)}{\sqrt{2\pi N\sigma^2(\beta_N)}},\quad N\to+\infty,\]
by Corollary~\ref{cor:1-lclt}. Solving \eqref{eq:lattice-probability-count} for $g_N(E_N)$ proves \eqref{eq:lattice-sharp-count}. Taking logarithms, dividing by $N$, and using $\beta_N\to\beta$ proves \eqref{eq:lattice-entropy-limit}.
\end{proof}

\begin{remark}
We make the following two remarks:
\begin{itemize}
\item In principle, the assumption $n_N-N\bm{m}(\lambda_N)=O(1)$ in Theorem~\ref{thm:h-equivalent} can be relaxed to $o(\sqrt{N})$. We choose the former not only to interpret the internal energy (as becomes more evident in Theorem~\ref{thm:lattice-equivalence}) as the average kinetic energy, but also for another key advantage: it directly covers the construction of rounding continuous expectations to their nearest integers.

\item The reachability assumption $E_N\in\im(H_N)$ in Theorem~\ref{thm:lattice-equivalence} is part of the statement, not a technical convenience.  In contrast to Theorem~\ref{thm:equivalence} below, an exact arithmetic shell can be empty along infinitely many values of $N$ unless the energy sequence respects the additive semigroup generated by $H(\Lambda)$.
\end{itemize}
\end{remark}

\section{Equivalence of Windowed Ensembles}\label{sec:window}
In this section, we investigate the thermodynamic limit behavior of the windowed microcanonical ensemble. By Theorem~\ref{thm:h-equivalent}, if $H(\Lambda)$ is nonlattice, the exact-shell microcanonical ensemble $\mu_{N,E_N}^{\sh}$ may not equivalent to the canonical ensemble $\mu_{N,\beta}^{\can}$. However, the windowed version discussed herein recovers the requisite ensemble equivalence. It should be noted that even without treating the lattice case of $H(\Lambda)$ separately, the approach developed in this section can be applied to it, provided that the window is chosen to be the maximal span.

We also consider independent copies $X_{1,\beta},X_{2,\beta},\cdots$ of a random variable with law $p_{\beta}$, write $S_{N,\beta}=\sum_{i=1}^NH(X_{i,\beta})$. Define the characteristic function $\varphi_{\beta}(t):=\E_{p_{\beta}}[e^{itH}]$ and let $\eta_{\beta}(t):=e^{-itm(\beta)}\varphi_{\beta}(t)$ be the centered characteristic function.

We present a windowed local central limit theorem, which is needed because when $H(\Lambda)$ is nonlattice, the appropriate local object is a bounded energy window rather than a point shell. The fixed-distribution version of this theorem was originally proved by Stone \cite{Stone} (or see \cite[\S1]{Borovkov} for a clearer description), and, analogously to \S\ref{sec:lattice-exact}, we require a uniform version here. We include a brief proof, which follows a line of argument largely identical to the proof of Proposition~\ref{prop:h-lclt}.

\begin{proposition}[Uniform Stone-LCLT]\label{thm:uniform-stone}
Suppose that $H(\Lambda)$ is nonlattice. Let $K\Subset(0,+\infty)$ be a compact set and let $\Delta>0$. Then
\[\sup_{\beta\in K}\sup_{x\in\R}\left|\sqrt{N}\Prob_{p_{\beta}}(S_{N,\beta}\in[x,x+\Delta))-\frac{\Delta}{\sqrt{2\pi\sigma^2(\beta)}}\exp\left(-\frac{(x-Nm(\beta))^2}{2N\sigma^2(\beta)}\right)\right|\to0,\quad N\to+\infty.\]
\end{proposition}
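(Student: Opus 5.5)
The plan is to follow Stone's smoothing argument, made uniform in $\beta\in K$, reusing the characteristic-function estimates from the proof of Proposition~\ref{prop:h-lclt} in the case $r=1$ but applied to the real variable $H$ rather than to integer coordinates. First I will record the uniform ingredients. The exponential-moment argument shows $\sup_{\beta\in K}\E_{p_\beta}[|H-m(\beta)|^3]<+\infty$, and $\sigma^2$ is continuous and positive on $K$. The Taylor argument then yields $\delta_K>0$ and $B_K>0$ with $|\eta_\beta(t)|\leq\exp(-B_Kt^2/4)$ for $|t|\leq\delta_K$, uniformly in $\beta\in K$. The new arithmetic input is the nonlattice hypothesis. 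If $|\varphi_\beta(t)|=1$ for some $t\neq0$, then, as in Proposition~\ref{prop:h-lclt}, all $e^{itH(x)}$ equal $1$ because $H(0)=0$, so $H(\Lambda)\subseteq(2\pi/|t|)\Z$, contradicting nonlatticeness. Hence $|\varphi_\beta(t)|<1$ for all $t\neq0$. By joint continuity on $K\times\{\delta\leq|t|\leq T\}$, for every $0<\delta<T$ there is $\rho=\rho_K(\delta,T)<1$ bounding $|\varphi_\beta|$ there. Unlike the lattice case, there is no periodicity, so this holds only on bounded $t$-ranges; that is exactly why we test against window indicators rather than points.

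Next comes the smoothing step. Fix a kernel $k$ that is a probability density on $\R$ with $\hat k$ continuous and supported in $[-1,1]$ (for instance a normalized Fejér-type kernel $c(\sin(y/2)/y)^4$). Set $k_\epsilon(y)=\epsilon^{-1}k(y/\epsilon)$, so that $\hat k_\epsilon$ is supported in $[-1/\epsilon,1/\epsilon]$. For a bounded integrable $g$ with integrable $\hat g$, Fourier inversion gives
\[\sqrt N\,\E[(g*k_\epsilon)(S_{N,\beta}-x)]=\frac{\sqrt N}{2\pi}\int_{|t|\leq1/\epsilon}\hat g(t)\hat k_\epsilon(t)\varphi_\beta(t)^Ne^{-itx}\,dt.\]
Split this integral into $|t|\leq\delta_K$ and $\delta_K<|t|\leq1/\epsilon$. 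On the second piece, the bound $\rho_K(\delta_K,1/\epsilon)^N$ makes the contribution $O(\sqrt N\rho^N)\to0$ uniformly in $\beta$ and $x$. On the first piece, substitute $t=s/\sqrt N$ and run the dominated-convergence argument of \eqref{eq:sup-int}, using $\hat g(s/\sqrt N)\to\hat g(0)=\int g$. This gives the limit $\int g\cdot(2\pi\sigma^2(\beta))^{-1/2}\exp(-(x-Nm(\beta))^2/(2N\sigma^2(\beta)))$ uniformly in $\beta\in K$ and $x\in\R$. The continuity in $x$ of the Gaussian factor on the scale $\sqrt N$ is harmless.

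Finally, I will pass from smoothed functions to the indicator $\one_{[0,\Delta)}$ by sandwiching. For small $\epsilon'>0$, choose continuous $g^\pm$ with compactly supported, sufficiently smooth pieces (so that $\hat g^\pm$ is integrable), satisfying $g^-\leq\one_{[0,\Delta)}\leq g^+$ and $\int g^\pm=\Delta\pm\epsilon'$. Then choose $\epsilon$ small so that $g^-_\epsilon\leq g^-*k_\epsilon+\epsilon'\cdot(\text{tail})$ and similarly for the upper bound. The cleaner way to arrange this is Stone's: shrink or enlarge the window by a margin $\gamma$, and control the mass of $k_\epsilon$ outside $[-\gamma,\gamma]$. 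The leftover error is bounded by $\sqrt N\,\Prob(S_{N,\beta}\in I)$ for windows $I$ of fixed length, and the upper smoothed bound already controls this uniformly by a constant. Letting $N\to\infty$ and then $\epsilon',\gamma,\epsilon\to0$ gives the claim.

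The main obstacle I expect is this sandwich step: making the tail-of-kernel error uniform. It requires an a priori uniform bound $\sup_{\beta,x}\sqrt N\,\Prob(S_{N,\beta}\in[x,x+1))<C$, which I would first extract from the smoothed estimate applied to a fixed nonnegative $g$ with $g\geq\one_{[0,1)}$.
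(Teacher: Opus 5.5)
Your proposal is correct. Its Fourier-analytic core is the same as the paper's: uniform third moments, the Gaussian bound on $|\eta_\beta|$ near $0$, the nonlattice bound $\rho<1$ on compact $t$-annuli, and the rescaled $L^1$ convergence uniform in $\beta\in K$ and $x\in\R$.

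The genuine difference is how you pass from smooth test functions to $\one_{[0,\Delta)}$.
\begin{itemize}
\item The paper imports Beurling--Selberg extremal functions (Lemma~\ref{lem:fourier-sandwich}). These are exact pointwise majorants and minorants of the enlarged and shrunk windows with compactly supported Fourier transforms. Applying the smoothing estimate to $g^\pm$ therefore gives both bounds at once, with no leakage term and no a priori bound.
\item You use Stone's original device instead: one nonnegative kernel $k$ with compactly supported $\widehat k$, convolved with the indicators of $I^\gamma=[-\gamma,\Delta+\gamma)$ and $I_\gamma=[\gamma,\Delta-\gamma)$. This is more elementary and self-contained.
\end{itemize}
The price of your route is the lower bound. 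Splitting the convolution at $|z|=\gamma$ gives
\[\sqrt N\,\Prob(S_{N,\beta}-x\in[0,\Delta))\geq\sqrt N\,\E[(\one_{I_\gamma}*k_\epsilon)(S_{N,\beta}-x)]-\tau\sup_{x'}\sqrt N\,\Prob(S_{N,\beta}\in x'+I_\gamma),\]
where $\tau=\int_{|w|>\gamma/\epsilon}k(w)\,dw$.

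The a priori bound you worry about actually comes for free. Since $k\geq0$, you have pointwise $\one_{[0,\Delta)}\leq(1-\tau)^{-1}\,\one_{I^\gamma}*k_\epsilon$. This single inequality gives the upper bound and, because $\sigma^2$ is bounded below on $K$, the uniform constant controlling the leakage term.

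Some details to tidy:
\begin{itemize}
\item Drop your first attempted sandwich via $g^-*k_\epsilon$. A nonzero kernel with compactly supported Fourier transform is never compactly supported, so that convolution cannot sit pointwise below the indicator. Use the Stone version only.
\item Drop the hypothesis that $\widehat g$ is integrable in the smoothing step, since you apply it to $g=\one_{I^\gamma}$ and $g=\one_{I_\gamma}$. It is not needed: $\widehat g$ is bounded and continuous for any $g\in L^1$, and $\widehat{k}_\epsilon$ already supplies compact support.
\item The kernel $c(\sin(y/2)/y)^4$ has Fourier transform supported in $[-2,2]$, not $[-1,1]$. Rescale it.
\item Take the limits in this order: $N\to\infty$, then $\epsilon\to0$ at fixed $\gamma$ (so that $\tau\to0$), then $\gamma\to0$. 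Letting $\gamma\to0$ at fixed $\epsilon$ sends $\tau\to1$ and breaks the argument.
\end{itemize}
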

According to the same arguments as in the proof of Proposition~\ref{prop:h-lclt}, we obtain:
\begin{itemize}
\item $\sup_{\beta\in K}\E_{p_{\beta}}[e^{\eta H}]<+\infty$ for some $\eta>0$, so the third absolute moments are uniformly bounded on $K$.

\item For every $0<\delta<T<\infty$, there is $\rho<1$, depending on $K,\delta,T$, such that
\[\sup_{\beta\in K}\sup_{\delta\leq|t|\leq T}|\varphi_{\beta}(t)|\leq\rho.\]
Indeed, if $|\varphi_{\beta}(t)|=1$ for some $t\neq0$, then one can conclude $H(\Lambda)$ is lattice, a contradiction.
\end{itemize}

We also need the following fact:
\begin{lemma}[Beurling-Selberg, {\cite[\S1]{Vaaler}}]\label{lem:fourier-sandwich}
For $a<b$, define the symmetrized interval characteristic function by
\[\chi_{a,b}(y)=\one_{(a,b)}(y)+\frac{1}{2}\one_{\{a\}}(y)+\frac{1}{2}\one_{\{b\}}(y).\]
For every $\varepsilon>0$, there exist bounded, real-valued, integrable functions $g^-_{\varepsilon}$ and $g^+_{\varepsilon}$ such that
\[g^-_{\varepsilon}\leq\chi_{a,b}\leq g^+_{\varepsilon}\quad\text{and}\quad\int_{\R}(g^+_{\varepsilon}-g^-_{\varepsilon})dy<\varepsilon,\]
and both Fourier transforms are continuous and compactly supported.
\end{lemma}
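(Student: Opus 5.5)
The cleanest route is Selberg's explicit construction from Beurling's extremal function, which even gives $\int(g^+_\varepsilon-g^-_\varepsilon)=1/T$ for Fourier support in $[-T,T]$; this is what \cite[\S1]{Vaaler} records. For the present purposes only $\int(g^+-g^-)<\varepsilon$ is needed, so I would instead give a softer convolution argument. It uses kernels with compactly supported Fourier transform and a small correction term for the minorant.

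\textbf{Kernels.} Fix an integer $m\geq2$. Let
\[K(y)=c_m\left(\frac{\sin\pi y}{\pi y}\right)^{2m},\]
normalized so that $\int K=1$. Then $K\geq0$, and its Fourier transform is an $m$-fold convolution of triangle functions, hence continuous and compactly supported. For $T>0$ put $K_T(y)=TK(Ty)$; its Fourier transform is supported in a ball of radius $O(T)$. Tail bounds follow from $K(y)\lesssim|y|^{-2m}$:
\[\int_{|z|>s}K_T(z)\,dz\lesssim(Ts)^{-(2m-1)}.\]
I also need a fixed positive function $w$ with compactly supported Fourier transform that decays no faster than a power. I take
\[w(y)=\left(\frac{\sin\pi y}{\pi y}\right)^{2}+\left(\frac{\sin\pi(y-\frac12)}{\pi(y-\frac12)}\right)^{2},\]
raised to the $(m-1)$-st power. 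It satisfies $w(y)\geq c(1+y^2)^{-(m-1)}$ because the two summands never vanish simultaneously, it is integrable for $m\geq2$, and $\widehat w$ is compactly supported.

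\textbf{Majorant.} Given $\delta>0$, set
\[g^+=(1-\tau)^{-1}\,\one_{[a-\delta,b+\delta]}*K_T,\qquad \tau:=\int_{|z|>\delta}K_T=O\bigl((T\delta)^{-(2m-1)}\bigr).\]
On $[a,b]$ the convolution is at least $1-\tau$, so $g^+\geq1\geq\chi_{a,b}$ there. Off $[a,b]$, $g^+\geq0=\chi_{a,b}$. Moreover $g^+$ is bounded, integrable, and $\widehat{g^+}$ is a sinc factor times $\widehat{K_T}$, hence continuous and compactly supported. Its integral is $(b-a+2\delta)/(1-\tau)$.

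\textbf{Minorant, the main obstacle.} I take $h=\one_{[a+\delta,b-\delta]}*K_T$, assuming $2\delta<b-a$. We have $h\leq1$ everywhere, but $h>0$ outside $[a,b]$, so it must be corrected there. For $y\notin(a,b)$ with distance $d\geq\delta$ from $[a+\delta,b-\delta]$, the tail bound gives
\[h(y)\lesssim(Td)^{-(2m-1)}.\]
This is dominated by $M\,T^{-(2m-1)}\delta^{-(2m-1)}\,w\bigl((y-\tfrac{a+b}{2})/L\bigr)$ for a suitable fixed scale $L\sim b-a$ and constant $M$, since $w$ decays only like $|y|^{-2(m-1)}$, which is slower than $d^{-(2m-1)}$. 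So I set
\[g^-=h-M\,(T\delta)^{-(2m-1)}\,w\bigl((y-\tfrac{a+b}{2})/L\bigr).\]
Then $g^-\leq0\leq\chi_{a,b}$ off $(a,b)$, and $g^-\leq h\leq1$ on $(a,b)$. The endpoint values $\tfrac12$ are harmless because both inequalities hold with room there. Again $\widehat{g^-}$ is continuous and compactly supported. I expect checking this domination uniformly, including near the endpoints, to be the only delicate step; it is where the choice $m\geq2$ and the non-vanishing of $w$ are used.

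\textbf{Conclusion.} Combining,
\[\int(g^+-g^-)\leq\frac{b-a+2\delta}{1-\tau}-(b-a-2\delta)+O\bigl((T\delta)^{-(2m-1)}\bigr).\]
Choose $\delta$ small and then $T$ large; this is $<\varepsilon$.
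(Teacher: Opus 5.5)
Your proof is correct, and it takes a genuinely different route from the paper. The paper gives no argument of its own; it simply imports Selberg's extremal majorant and minorant $S^{\pm}$ from Vaaler's survey. These are explicit entire functions of exponential type built from Beurling's majorant of $\mathrm{sgn}$. (Incidentally, with $\widehat{S^{\pm}}$ supported in $[-T,T]$ under the normalization $e^{-2\pi i y\xi}$, each of $S^{\pm}$ is within $1/T$ of $\chi_{a,b}$ in $L^1$, so $\int(S^+-S^-)=2/T$, not $1/T$.)

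You instead smooth slightly enlarged and shrunk intervals with the Jackson-type kernel $K_T$. You renormalize the majorant by $(1-\tau)^{-1}$ and absorb the positive tails of the minorant into a multiple of the positive, polynomially decaying band-limited function $w$.

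The step you flag does go through. Take $L=b-a$ and $d=\mathrm{dist}(y,[a+\delta,b-\delta])\ge\delta$. Then $h(y)\le\int_{|z|\ge d}K_T\lesssim(Td)^{-(2m-1)}$ and $|y-\tfrac{a+b}{2}|\le d+L/2$. The quantity $(\delta/d)^{2m-1}\bigl(1+(d+L/2)^2/L^2\bigr)^{m-1}$ is bounded by a constant depending only on $m$ (split at $d=L$). Note that the lower bound $w(y)\gtrsim(1+y^2)^{-(m-1)}$ at infinity comes from $\sin^2+\cos^2=1$ in the two summands, not just from their not vanishing simultaneously.

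As for what each approach buys: Selberg's functions give the sharp $O(1/T)$ error, which is useful for quantitative refinements of Stone's theorem. Your error $4\delta+O((T\delta)^{-(2m-1)})$ optimizes only to about $T^{-(2m-1)/(2m)}$. But the paper uses the lemma purely qualitatively in the proof of Proposition~\ref{thm:uniform-stone}, so nothing is lost. Your construction is elementary and self-contained, and it even yields $g^-\le\one_{(a,b)}\le\one_{[a,b]}\le g^+$, which makes the endpoint convention in $\chi_{a,b}$ irrelevant.
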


\begin{proof}(of Proposition~\ref{thm:uniform-stone})
Since the centered third absolute moments are uniformly bounded on $K$, we have
\[\eta_{\beta}(t)=1-\frac{\sigma^2(\beta)t^2}{2}+O(|t|^3),\quad t\to0,\]
uniformly in $\beta\in K$. The compactness of $K$ then implies that for some $c,\delta_0>0$,
\[|\eta_\beta(t)|\leq e^{-ct^2},\quad\beta\in K,|t|\leq\delta_0.\]
Moreover, since $H(\Lambda)$ is nonlattice, for every $T>\delta_0$ there exists $\rho<1$, such that
\[\sup_{\beta\in K}\sup_{\delta_0\leq|t|\leq T}|\eta_{\beta}(t)|\leq\rho.\]

We first record a smoothing estimate. Let $g$ be a function that is the continuous inverse Fourier transform of a compactly supported integrable function $\widehat{g}$, say $\mathrm{supp}(\widehat{g})\subseteq[-T,T]$. By the Fourier inversion formula, for $y\in\R$,
\[\sqrt{N}\E_{p_{\beta}}[g(S_{N,\beta}-Nm(\beta)-y)]=\frac{1}{2\pi}\int_{\R}\widehat{g}\left(\frac{t}{\sqrt{N}}\right)\eta_{\beta}\left(\frac{t}{\sqrt{N}}\right)^Ne^{-ity/\sqrt{N}}dt.\]
The preceding estimates imply
\[\sup_{\beta\in K}\int_{\R}\left|\widehat{g}\left(\frac{t}{\sqrt{N}}\right)\eta_{\beta}\left(\frac{t}{\sqrt{N}}\right)^N-\widehat{g}(0)e^{-\sigma^2(\beta)t^2/2}\right|dt\to0,\quad N\to+\infty.\]
Indeed, on bounded $t$-intervals this follows from the uniform Taylor expansion; on $|t|\leq\delta_0\sqrt{N}$ the integrand is dominated by a Gaussian, while on $\delta_0\sqrt{N}\leq|t|\leq T\sqrt{N}$ it is $O(\rho^N)$. Since $\widehat{g}(0)=\int_{\R}g$, the Gaussian Fourier transform yields, uniformly in $\beta\in K$ and $y\in\R$,
\[\sqrt{N}\E_{p_{\beta}}[g(S_{N,\beta}-Nm(\beta)-y)]=\frac{\int_{\R}g(u)du}{\sqrt{2\pi\sigma^2(\beta)}}\exp\left(-\frac{y^2}{2N\sigma^2(\beta)}\right)+o(1),\quad N\to+\infty.\]

We now apply this estimate to the interval $[0,\Delta)$. Let $0<\delta<\frac{\Delta}{2}$ and $\varepsilon>0$. By Lemma~\ref{lem:fourier-sandwich}, there exist bounded integrable functions $g^-_{\delta,\varepsilon}$ and $g^+_{\delta,\varepsilon}$, with continuous compactly supported Fourier
transforms, such that
\[g^-_{\delta,\varepsilon}\leq\chi_{\delta,\Delta-\delta}\leq\one_{[0,\Delta)}\leq\chi_{-\delta,\Delta+\delta}\leq g^+_{\delta,\varepsilon},\]
and
\[\Delta-2\delta-\varepsilon\leq\int_{\R}g^-_{\delta,\varepsilon}(u)du\leq\Delta;\quad\Delta\leq\int_{\R}g^+_{\delta,\varepsilon}(u)du\leq\Delta+2\delta+\varepsilon.\]
Taking $y=x-Nm(\beta)$ and applying the smoothing estimate to $g^-_{\delta,\varepsilon}$ and $g^+_{\delta,\varepsilon}$, we obtain
\[\limsup_{N\to+\infty}\sup_{\beta\in K}\sup_{x\in\R}\left|\sqrt{N}\Prob_{p_{\beta}}(S_{N,\beta}\in[x,x+\Delta))-\frac{\Delta}{\sqrt{2\pi\sigma^2(\beta)}}\exp\left(-\frac{(x-Nm(\beta))^2}{2N\sigma^2(\beta)}\right)\right|\leq C(2\delta+\varepsilon),\]
where $C$ depends only on $K$. Letting first $\varepsilon\downarrow0$ and then $\delta\downarrow0$ proves the result.
\end{proof}

Let $K\Subset(0,+\infty)$ be a compact set and $\beta\in K$. For $x\in\R$ and $\Delta>0$, denote
\[J_{N,\beta}(x,\Delta):=\E_{p_{\beta}}[e^{\beta(S_{N,\beta}-x)}\one_{\{0\leq S_{N,\beta}-x<\Delta\}}].\]
By Proposition~\ref{thm:uniform-stone}, we obtain
\begin{equation}\label{eq:exp-uniform-stone}
J_{N,\beta}(x,\Delta)=\frac{\int_0^{\Delta}e^{\beta t}dt}{\sqrt{2\pi N\sigma^2(\beta)}}\exp\left(-\frac{(x-Nm(\beta))^2}{2N\sigma^2(\beta)}\right)+o(N^{-1/2}),\quad N\to+\infty,
\end{equation}
uniformly for $\beta\in K$ and $x\in\R$. This asymptotic formula is a locally weighted version of Proposition~\ref{thm:uniform-stone}. Based on this, the counting identity, like \eqref{eq:lattice-probability-count}, is
\begin{equation}\label{eq:counting-identity}
G_N(E,\Delta)=Z(\beta)^Ne^{\beta E}J_{N,\beta}(E,\Delta).
\end{equation}
Indeed, summing $1=Z(\beta)^Ne^{\beta H_N(x)}\mu^{\can}_{N,\beta}(x)$ over the window yields \eqref{eq:counting-identity}.

\begin{theorem}[Windowed Equivalence]\label{thm:equivalence}
Suppose $H(\Lambda)$ is nonlattice. Fix $\Delta>0$, let $E_N\in\R$ satisfy $E_N/N\to u>0$, and set $\beta=m^{-1}(u)$. Then $G_N(E_N,\Delta)>0$ for all sufficiently large $N$, and for every fixed $k\geq1$,
\begin{equation}\label{eq:TV-equivalence}
\left\|(\pi_k)_*\mu^{\mc}_{N,E_N,\Delta}-p_\beta^{\otimes k}\right\|_{\TV}\to0,\quad N\to+\infty.
\end{equation}
Moreover, let $u_N=E_N/N$ and $\beta_N=m^{-1}(u_N)$, then
\begin{equation}\label{eq:sharp-count}
G_N(E_N,\Delta)=\frac{Z(\beta_N)^Ne^{\beta_NE_N}}{\sqrt{2\pi N\sigma^2(\beta_N)}}\left(\int_0^{\Delta}e^{\beta_Nt}dt\right)(1+o(1)).
\end{equation}
Consequently, the thermodynamic limit
\begin{equation}\label{eq:entropy-limit}
\lim_{N\to+\infty}\frac{1}{N}R_{N,E_N,\Delta}^{\mc}=\psi(\beta)+\beta u=s(u).
\end{equation}
\end{theorem}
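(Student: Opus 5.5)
The plan is to mirror the proof of Theorem~\ref{thm:lattice-equivalence}, replacing Corollary~\ref{cor:1-lclt} by its weighted windowed analogue \eqref{eq:exp-uniform-stone}. Since $u_N=E_N/N\to u$ and $m^{-1}$ is continuous by Proposition~\ref{prop:thermodynamic-functions}, we have $\beta_N\to\beta$. Hence $\{\beta_N\}\cup\{\beta\}$ lies in a compact $K\Subset(0,+\infty)$, and $E_N=Nm(\beta_N)$ exactly.

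\textbf{Sharp count.} Apply the counting identity \eqref{eq:counting-identity} with $\beta=\beta_N$, which gives $G_N(E_N,\Delta)=Z(\beta_N)^Ne^{\beta_NE_N}J_{N,\beta_N}(E_N,\Delta)$. At the centered point $x=E_N=Nm(\beta_N)$ the Gaussian factor in \eqref{eq:exp-uniform-stone} equals $1$. Uniformity over $K$ then gives
\[J_{N,\beta_N}(E_N,\Delta)=\frac{\int_0^\Delta e^{\beta_Nt}dt}{\sqrt{2\pi N\sigma^2(\beta_N)}}+o(N^{-1/2}).\]
The main term is bounded below by $\Delta/\sqrt{2\pi N\sup_K\sigma^2}$, so the error is relative $o(1)$, which proves \eqref{eq:sharp-count}. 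In particular $G_N>0$ for large $N$. Taking logarithms and dividing by $N$, the prefactors contribute $O(\log N/N)$. Then $\psi(\beta_N)+\beta_Nu_N\to\psi(\beta)+\beta u=s(u)$, which gives \eqref{eq:entropy-limit}.

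\textbf{TV convergence.} Fix $y\in\Lambda^k$ with $A(y):=\sum_{i\le k}H(y_i)$. Counting the configurations of the remaining $N-k$ coordinates gives
\[(\pi_k)_*\mu^{\mc}_{N,E_N,\Delta}(y)=\frac{G_{N-k}(E_N-A(y),\Delta)}{G_N(E_N,\Delta)}.\]
Apply \eqref{eq:counting-identity} with the same $\beta_N$ in both numerator and denominator:
\[\frac{G_{N-k}(E_N-A(y),\Delta)}{G_N(E_N,\Delta)}=p_{\beta_N}^{\otimes k}(y)\,\frac{J_{N-k,\beta_N}(E_N-A(y),\Delta)}{J_{N,\beta_N}(E_N,\Delta)}.\]
This holds because $Z^{N-k}e^{\beta_N(E_N-A)}/(Z^Ne^{\beta_NE_N})=Z^{-k}e^{-\beta_NA(y)}=p_{\beta_N}^{\otimes k}(y)$.

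For the numerator, the displacement is $E_N-A(y)-(N-k)m(\beta_N)=km(\beta_N)-A(y)=O(1)$. Its Gaussian factor is therefore $\exp(-O(1)/N)\to1$. Applying \eqref{eq:exp-uniform-stone} with $N-k$ in place of $N$, which is allowed by uniformity in $x$ and $\beta\in K$, the numerator is $\frac{\int_0^\Delta e^{\beta_Nt}dt}{\sqrt{2\pi(N-k)\sigma^2(\beta_N)}}(1+o(1))$. The ratio of the two $J$'s therefore tends to $1$. Since $p_{\beta_N}^{\otimes k}(y)\to p_\beta^{\otimes k}(y)$ by continuity of $Z$, we get pointwise convergence of the marginals to $p_\beta^{\otimes k}$. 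Scheff\'e's lemma upgrades this to the total variation statement \eqref{eq:TV-equivalence}.

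\textbf{Main obstacle.} The delicate point is justifying \eqref{eq:exp-uniform-stone} uniformly in $\beta\in K$ and $x$, since it is only asserted as a consequence of Proposition~\ref{thm:uniform-stone}. I would prove it by splitting $[0,\Delta)$ into $L$ subintervals of length $\Delta/L$. On each subinterval, bound $e^{\beta(S-x)}$ above and below by its endpoint values and apply Proposition~\ref{thm:uniform-stone} to each piece. Each piece carries an $o(N^{-1/2})$ error, uniformly in $\beta$ and $x$. This produces upper and lower Riemann sums for $\int_0^\Delta e^{\beta t}dt$, with a discretization error $O(e^{\sup K\cdot\Delta}\Delta/L)$ uniform in $\beta\in K$. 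Letting $N\to\infty$ and then $L\to\infty$ gives the claim.

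A secondary point is that the $o(N^{-1/2})$ error must be negligible relative to the main term. This works only at centered points, where the Gaussian factor is $1+o(1)$; that is exactly why $\beta_N$ is chosen with $m(\beta_N)=u_N$ rather than using the fixed $\beta$.
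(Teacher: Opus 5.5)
Your proposal is correct and follows the paper's proof essentially step for step. Both arguments use the counting identity \eqref{eq:counting-identity} with the moving parameter $\beta_N=m^{-1}(u_N)$, apply \eqref{eq:exp-uniform-stone} at the centered point and at the $O(1)$-displaced point to get the ratio of $J$'s, and conclude by pointwise convergence of the marginals plus Scheff\'{e}'s lemma. Your Riemann-sum derivation of \eqref{eq:exp-uniform-stone} from Proposition~\ref{thm:uniform-stone}, which the paper only asserts, is a sound addition; the one detail to record is that the Gaussian factor for the $j$-th subinterval is evaluated at $x+j\Delta/L$, and since the Gaussian is Lipschitz with constant $O(N^{-1/2})$, this differs from the value at $x$ by $O(N^{-1/2})$ uniformly in $x$.
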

\begin{proof}
Write $u_N=E_N/N$ and $\beta_N=m^{-1}(u_N)$. Since $\beta_N\to\beta$, the sequence $\{\beta_N\}\subseteq(0,+\infty)$ is compact. By \eqref{eq:exp-uniform-stone},
\[J_{N,\beta_N}(E_N,\Delta)=\frac{\int_0^{\Delta}e^{\beta_Nt}dt}{\sqrt{2\pi N\sigma^2(\beta_N)}}+o(N^{-1/2})>0\]
for all sufficiently large $N$. Hence $G_N(E_N,\Delta)>0$ for sufficiently large $N$.

Let $y=(y_1,\cdots,y_k)\in\Lambda^k$ and write $A(y):=\sum_{i=1}^kH(y_i)$. By \eqref{eq:counting-identity},
\[(\pi_k)_*\mu^{\mc}_{N,E_N,\Delta}(y)=\frac{G_{N-k}(E_N-A(y),\Delta)}{G_N(E_N,\Delta)}=p_{\beta_N}^{\otimes k}(y)\frac{J_{N-k,\beta_N}(E_N-A(y),\Delta)}{J_{N,\beta_N}(E_N,\Delta)}.\]
The denominator is centered because $E_N=Nm(\beta_N)$, and the numerator has displacement $(E_N-A)-(N-k)m(\beta_N)=ku_N-A=O(1)$. Hence, by \eqref{eq:exp-uniform-stone} again, we have
\[J_{N,\beta_N}(E_N,\Delta)=\frac{c_N+o(1)}{\sqrt{N}}\quad\text{and}\quad J_{N-k,\beta_N}(E_N-A(y),\Delta)=\frac{c_N+o(1)}{\sqrt{N-k}},\]
where
\[c_N=\frac{\int_0^{\Delta}e^{\beta_Nt}dt}{\sqrt{2\pi\sigma^2(\beta_N)}}\to\frac{\int_0^{\Delta}e^{\beta t}dt}{\sqrt{2\pi\sigma^2(\beta)}}>0,\quad N\to+\infty.\]
The ratio of the two $J$-terms tends to $1$, while $p_{\beta_N}^{\otimes k}(y)\to p_{\beta}^{\otimes k}(y)$. So $(\pi_k)_*\mu^{\mc}_{N,E_N,\Delta}\to p_{\beta}^{\otimes k}$ converges pointwise on $\Lambda^k$. Now \eqref{eq:TV-equivalence} follows from the Scheff\'{e}'s lemma.

Formula \eqref{eq:sharp-count} is \eqref{eq:counting-identity} combined with \eqref{eq:exp-uniform-stone} at the centered point $E_N=Nm(\beta_N)$. Taking logarithms we have
\[\frac{1}{N}\log G_N(E_N,\Delta)=\psi(\beta_N)+\beta_Nu_N-\frac{\log N}{2N}+O(1/N),\quad N\to+\infty,\]
where the $O(1/N)$ term is uniform along the sequence because the remaining prefactor converges to a positive finite limit. Since $u_N\to u$ and $\beta_N\to\beta$, this proves \eqref{eq:entropy-limit}.
\end{proof}

The window width $\Delta$ is fixed in Theorem~\ref{thm:equivalence}. The limiting marginal is independent of $\Delta$, although the subexponential prefactor in the state count depends on it.

\section{Comparison with Bost's Thermodynamic Formalism}\label{sec:bost}
Let $F$ be a number field and let $\overline{L}$ be a Hermitian line bundle over $\Spec(\mathcal{O}_F)$: its finite part is a projective rank one $\mathcal{O}_F$-module $L$, and its archimedean part consists of conjugation-invariant Hermitian metrics. The underlying abelian group of $L$ is free of rank $[F:\mathbb Q]$. If $a_{\tau}$ denotes the image of $a\in L$ in the one-dimensional archimedean fibre at $\tau$, the metrics define a Euclidean norm
\[\|a\|^2:=\sum_{\tau:F\hookrightarrow\R}\|a_{\tau}\|_{\tau}^2+2\sum_{\{\tau:F\hookrightarrow\mathbb{C}\}/\mathrm{conjugation}}\|a_{\tau}\|_{\tau}^2.\]
Thus $(L,\|\cdot\|)$ is a Euclidean lattice.

Now, the construction in \S\ref{sec:lattice-exact} and \S\ref{sec:window} applies. Whether $H(L)$ is lattice or nonlattice is a property of the metric values, not merely of the field $F$. If $H(L)$ is lattice, Theorem~\ref{thm:lattice-equivalence} applies to every reachable exact-energy sequence. If $H(L)$ is nonlattice, Theorem~\ref{thm:equivalence} applies to
\[\#\left\{(a_1,\cdots,a_N)\in L^N:E_N\leq\sum_{i=1}^N\|a_i\|^2<E_N+\Delta\right\}.\]

\begin{example}\label{ex:ex}
Consider $F=\Q(\sqrt2)$ and $L=\mathcal{O}_F=\Z[\sqrt2]$.  Define positive archimedean weights so that, for $a,b\in\Z$,
\[H(a+b\sqrt2)=(a+b\sqrt2)^2+2(a-b\sqrt2)^2=3(a^2+2b^2)-2\sqrt2ab.\]
It is clear that the energy set $H(L)$ is nonlattice. The windowed theory therefore applies, whereas exact shells may encode the two integer statistics $\sum(a_i^2+2b_i^2)$ and $\sum a_ib_i$ separately. Adopting the notations in \S\ref{sec:lattice-exact}, one has
\[r=2,\quad (a_1,a_2)=(3,-2\sqrt2),\quad (F_1,F_2)=(a^2+2b^2,ab).\]
The generalized Gibbs law associated with $\lambda=(1,0)$ is proportional to $e^{-(a^2+2b^2)}$ by Theorem~\ref{thm:h-equivalent}, which is not proportional to $e^{-\beta H}$.
\end{example}

The logarithmic theta series controls both the local Gibbs limit and the exponential growth rate of lattice points in high-dimensional shells. The thermodynamic entropy density function $s(u)$ records arithmetic information of Hermitian line bundles: the Arakelov degree is recovered from the renormalized high-temperature partition function by the Poisson summation formula \cite[\S3.1]{Bost},
\[\log Z(\beta)=\frac{d}{2}\log\frac{\pi}{\beta}+\widehat{\deg}_{\mathcal{O}_F}(\overline{L})-\frac{1}{2}\log|D_F|+o(1),\quad\beta\downarrow0,\]
where $d=[F:\Q]$ and $\widehat{\deg}_{\mathcal{O}_F}(\overline{L})=\frac{1}{2}\log|D_F|-\log\mathrm{covol}(\overline{L})$. So we have
\begin{equation}\label{eq:su}
s(u)=\log Z(\beta)+\beta u=\frac{d}{2}\left(1+\log\frac{2\pi u}{d}\right)+\left(\widehat{\deg}_{\mathcal{O}_F}(\overline{L})-\frac{1}{2}\log|D_F|\right)+o(1),\quad u\to+\infty,
\end{equation}
since one can verify there is $c>0$ such that
\[u=m(\beta)=-\frac{d}{d\beta}\log Z(\beta)=\frac{d}{2\beta}+O(\beta^{-2}e^{-c/\beta}),\quad\beta\downarrow0.\]

The physical meaning of the first term in \eqref{eq:su} is the discrete version of the thermodynamic entropy in the classical sense, while the second term serves as a correction provided by the arithmetic information of the lattice.

Bost's framework \cite[\S5-\S6]{Bost} starts from a measure space $(\mathcal{E},\mathcal{T},\mu)$ and a nonnegative Hamiltonian $H_{\mathrm{B}}$. It associates
\[Z_{\mathrm{B}}(t):=\int_{\mathcal{E}}e^{-tH_{\mathrm{B}}}d\mu,\quad\Psi_{\mathrm{B}}(t):=\log Z_{\mathrm{B}}(t),\]
and the cumulative $N$-fold sublevel measure
\[A_N(E):=\mu^{\otimes N}\{H_{\mathrm{B},N}\leq NE\}.\]
Under his basic integrability assumptions, Bost proved that
\begin{equation}\label{eq:bost-main}
S_{\mathrm{B}}(E):=\lim_{N\to+\infty}\frac{1}{N}\log A_N(E)=\inf_{t>0}\{\Psi_{\mathrm{B}}(t)+tE\},
\end{equation}
with the corresponding thermodynamic identities.

For a Euclidean lattice, Bost chose counting measure and $H_{\mathrm{B}}(x)=\pi\|x\|^2$. With the normalization of the present paper,
\begin{equation}\label{eq:bost-normalization}
\Psi_{\mathrm{B}}(t)=\psi(\pi t),\quad A_N(\pi u)=\#\{x\in\Lambda^N:H_N(x)\leq Nu\}.
\end{equation}
Changing variables $\beta=\pi t$ in \eqref{eq:bost-main} yields $S_{\mathrm{B}}(\pi u)=s(u)$. In Bost's notation this is the relation $\widetilde{h}^0_{\mathrm{Ar}}(\overline{L},u)=s(u)$, up to precisely the normalization displayed in \eqref{eq:bost-normalization}.

The overlap is therefore complete at exponential scale: Bost's cumulative ball count, the lattice count in Theorem~\ref{thm:lattice-equivalence}, and the fixed-width nonlattice count in Theorem~\ref{thm:equivalence} all have logarithmic growth rate $s(u)$, this is actually the entropy. Furthermore, \cite[Proposition~5.2.2]{Bost} permits shrinking intervals in the specific energy, but its stated hypothesis corresponds to a total-energy width of order at least $\sqrt{N}$. It therefore does not cover the fixed total width $\Delta$ used here.

Our results are local in a different sense. In the lattice regime it isolates one reachable coefficient by a lattice local central limit theorem; in the nonlattice regime it resolves an $O(1)$ total-energy window by Stone's theorem. Moreover, Theorem~\ref{thm:lattice-equivalence} and Theorem~\ref{thm:equivalence} identify the limiting law of every fixed subsystem, a statement not contained in the cumulative counting formula \eqref{eq:bost-main}. Thus Bost supplied the general thermodynamic and Legendre-Fenchel structure, while the present results add the arithmetic lattice/nonlattice dichotomy, local state-counting prefactors, and equivalence of ensembles at the level of finite marginals.

\enlargethispage{4\baselineskip}


\begin{thebibliography}{99}

\bibitem{Borovkov} A.A. Borovkov and K.A. Borovkov, \emph{A Refined Version of the Integro-local Stone Theorem}, Stat. Probab. Lett., {\bf 123} (2017), 153-159.
\bibitem{Bost} J.-B. Bost, \emph{Chapter IV: Euclidean Lattices, Theta Invariants, and Thermodynamic Formalism}, in E. Peyre and G. R\'{e}mond (eds.), Arakelov Geometry and Diophantine Applications, Lecture Notes in Mathematics, vol. 2276, Springer, 2021, 105-211.
\bibitem{Cancrini} N. Cancrini and S. Olla. Ensemble Dependence of Fluctuations: Canonical Microcanonical Equivalence of Ensembles. Journal of Statistical Physics, {\bf 168(4)} (2017), 707-730.
\bibitem{Diaconis} P. Diaconis and D. A. Freedman, \emph{Conditional Limit Theorems for Exponential Families and Finite Versions of de Finetti's Theorem}, J. Theoret. Probab., {\bf 1} (1988), 381-410.
\bibitem{Ellis} R.S. Ellis, \emph{Entropy, Large Deviations, and Statistical Mechanics}, Grundlehren der mathematischen Wissenschaften, vol. 271, Springer, 1985.
\bibitem{Lawler} G.F. Lawler and V. Limic, \emph{Random Walk: A Modern Introduction}, Cambridge University Press, 2012.
\bibitem{Plastino} A.R. Plastino and A. Plastino, \emph{From Gibbs Microcanonical Ensemble to Tsallis Generalized Canonical Distribution}, Physics Letters A, {\bf 193(2)} (1994), 140-143.
\bibitem{Shepp} L. A. Shepp, \emph{A Local Limit Theorem}, The Annals of Mathematical Statistics, {\bf 35(1)} (1964), 419-423.
\bibitem{Stone} C. Stone, \emph{A Local Limit Theorem for Nonlattice Multi-dimensional Distribution Functions}, Ann. Math. Statist., {\bf 36} (1965), 546-551.
\bibitem{Vaaler} J. D. Vaaler, \emph{Some Extremal Functions in Fourier Analysis}, Bull. Amer. Math. Soc., {\bf 12} (1985), 183-216.
\end{thebibliography}
\end{document}